\newcommand{\N}{{\mathbb N}}
\newcommand{\R}{{\mathbb R}}
\newcommand{\C}{{\mathbb C}}
\newcommand{\K}{{\mathbb K}}
\newcommand{\OK}{{\overline{{\mathbb K}}}}
\newcommand{\OR}{{\overline{{\mathbb R}}}}
\newcommand{\Ot}{{\widetilde{\Omega}}_c}
\newcommand{\I}{{\bf{I}}}
\newcommand{\vv}{\vspace{0.6cm}}
\newcommand{\Hq}{{\mathbb H}}
\newcommand{\OHq}{{{\overline{\mathbb H}}}}
\newtheorem{theorem}{Theorem}[section]
\newtheorem{lemma}[theorem]{Lemma}
\newtheorem{proposition}[theorem]{Proposition}
\newtheorem{corollary}[theorem]{Corollary}
\newtheorem{definition}[theorem]{Definition}
\newtheorem{remark}[theorem]{Remark}
\newtheorem{example}[theorem]{Example}
\begin{document}

\title[The Colombeau  Quaternion Algebra]{The Colombeau  Quaternion  Algebra}

\author{Cortes W.}
\address{Department of Mathematics,
 Universidade Estadual de Maring\'a,  87020-900, Maring\'a, PR, Brazil} \email{wocorollarytes@uem.br}
 \author{Ferrero M.A.}
\address{ Instituto de Matem\'atica e Estat\'\i stica,
 UFRGS,
Caixa Postal 66281, S\~ao Paulo, CEP 05315-970 -
Brazil} \email{ferrero@mat.ufrgs.br}
\thanks{Research partially
supported by
CNPq-Brazil.
 \author{Juriaans S.O.}
\address{ Instituto de Matem\'atica e Estat\'\i stica,
 Universidade de S\~ao Paulo,
Caixa Postal 66281, S\~ao Paulo, CEP 05315-970 -
Brazil} \email{ostanley@ime.usp.br}
} \subjclass{Primary 46F30;
Secondary 46F20. Keywords and phrases: Colombeau algebra, generalized function, sharp topologies, quaternions, dense ideal, holomorphic function.}

\begin{abstract}We introduce and investigate the  topological  algebra of Colombeau Generalized quaternions,  $\OHq$. This is an important object to study if one wants to build the algebraic theory of Colombeau generalized numbers. We classify the dense ideals of $\OK$, in the algebraic sense and prove that it has a maximal ring of quotients which is Von Neumann regular. Using the classification of the dense ideals  we give  a criteria for a generalized holomorphic function to satisfy the identity theorem.
\end{abstract}

\maketitle

\section{{\bf{Introduction}}}

Since its introduction, the theory of Colombeau generalized functions has undergone rapid grow. Fundamental for the
theory were the definition of  Scarpalezos' sharp topologies  and the notion of point value
by Kunzinger-Obberguggenberger. A global theory was   developed in \cite{gksv}.

The study of the algebraic aspects of this theory however are relatively recent. This was proposed
independently by both J. Aragona and M. Obberguggenberger and   started with a paper by Aragona-Juriaans
(see \cite{AS}). This, and the developments mentioned above (see \cite{ko,S}), due to 
D. Scarpalezos, M.  Kunzinger and  M. Obberguggenberger,  led  Aragona-Fernandez-Juriaans to propose a differential
calculus which in its turn was used to continue the algebraic aspects of the theory (see \cite{afj,ASS}).

In this paper we focus on an algebra that may play an important role  in the study of the algebraic theory of these
algebras. We introduce the Colombeau generalized quaternion algebras, $\OHq$, and study its topological and algebraic
properties. We  further classify the dense ideals, in the algebraic sense, of $\OK$ and prove that $\OK$ and $\OHq$ have
a martindale ring of quotients. Using the classification of the dense ideals, we give a  criteria for a generalized
holomorphic function to satisfy the identity theorem.

The notation used is mostly standard. Some important references for
the theory of Colombeau generalized numbers, functions and their
topologies are \cite{AB}, \cite{JC2}, \cite{gksv},  \cite{ku},
\cite{mog} and \cite{S}.

%*************************************
\section{{\bf{Algebraic  Theory}}}

In this section we recall  some algebraic theory of Colombeau generalized functions. We refer the interested reader to  \cite{afj}, \cite{ASS} and \cite{AS} for notation, more details and proofs of the results mentioned here.

The norm of an element
$x\in \OK$ is defined by $||x||:=D(x,0)$, where $D$ is the ultra metric in $\OK$ defined by Scarpalezos. Denote by $Inv(\OK )$ the unit group of $\OK$.

 \typeout{$\OK\ $, equipped with the ultra metric defined above, is a   complete commutative  topological $\K$-algebra. It is neither  artinian nor  noetherian, it is not a domain and its Jacobson and prime radical are trivial (\cite{AS}).}

\begin{theorem}[Fundamental Theorem of $\ {\overline{\K}}$] {\rm [   \cite{AS} ]}

Let $x\in \OK$ be any element. Then one of the following holds:\\
{\bf{1.}} $x\in Inv(\OK )$.\\
{\bf{2.}} There exists an idempotent $e\in \OK$ such that $\ x\cdot e=0$.

\noindent Moreover $Inv(\OK )$ is an open and dense in $\ \OK$.
\end{theorem}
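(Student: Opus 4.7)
The approach is to work at the level of a moderate representative $(x_\varepsilon)_{\varepsilon\in(0,1]}$ of $x$ and split into two exhaustive cases on its asymptotic size near $\varepsilon=0$: either (A) there exist $m\in\N$ and $\eta>0$ with $|x_\varepsilon|\geq\varepsilon^m$ for every $\varepsilon\in(0,\eta)$, or (B) for every $m\in\N$ one finds $\varepsilon$ arbitrarily close to $0$ with $|x_\varepsilon|<\varepsilon^m$. In Case (A), I would set $y_\varepsilon=1/x_\varepsilon$ on $(0,\eta)$ and $y_\varepsilon=0$ otherwise; moderateness of $(y_\varepsilon)$ is immediate from $|y_\varepsilon|\leq\varepsilon^{-m}$, and $x_\varepsilon y_\varepsilon-1$ is supported in $[\eta,1]$ (hence negligible), so $x\in\mathrm{Inv}(\OK)$.

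\textbf{Idempotent construction (Case (B)).} This is the heart of the argument. Using (B) recursively, I would pick a strictly decreasing sequence $\varepsilon_n\to 0$ with $|x_{\varepsilon_n}|<\varepsilon_n^n$, and define $e_\varepsilon=1$ if $\varepsilon=\varepsilon_n$ for some $n$ and $e_\varepsilon=0$ otherwise. Then $(e_\varepsilon)$ is bounded by $1$ (hence moderate) and satisfies $e_\varepsilon^2=e_\varepsilon$ pointwise, so $e$ is an idempotent of $\OK$. For any $m\in\N$, at $\varepsilon=\varepsilon_n$ with $n\geq m$ one has $|x_\varepsilon e_\varepsilon|<\varepsilon_n^n\leq\varepsilon^m$, while $x_\varepsilon e_\varepsilon=0$ elsewhere, showing that $(x_\varepsilon e_\varepsilon)$ is negligible, i.e.\ $xe=0$ in $\OK$. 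Finally $e_{\varepsilon_n}=1$ together with $\varepsilon_n\to 0$ shows $(e_\varepsilon)$ is not negligible, so $e\neq 0$ (which is the actual content of alternative~(2), since $e=0$ would make the dichotomy vacuous).

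\textbf{Density and openness of $\mathrm{Inv}(\OK)$.} For density, given $x\in\OK$ and $n\in\N$, I would set $y_\varepsilon=x_\varepsilon$ when $|x_\varepsilon|\geq\varepsilon^n$ and $y_\varepsilon=\varepsilon^n$ otherwise; by Case (A) one has $y\in\mathrm{Inv}(\OK)$, while $|y_\varepsilon-x_\varepsilon|\leq 2\varepsilon^n$ makes $\|x-y\|$ arbitrarily small as $n\to\infty$. For openness, if $x\in\mathrm{Inv}(\OK)$ with $|x_\varepsilon^{-1}|\leq\varepsilon^{-m}$, then the identity $xx^{-1}=1$ in $\OK$ forces $|x_\varepsilon|\geq\varepsilon^m/2$ eventually; any $y$ satisfying $|y_\varepsilon-x_\varepsilon|\leq\varepsilon^{m+1}$ then has $|y_\varepsilon|\geq\varepsilon^m/4$ eventually, so $y\in\mathrm{Inv}(\OK)$ by Case (A), giving a sharp neighborhood of $x$ inside $\mathrm{Inv}(\OK)$. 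The main delicate step is the idempotent construction: one must juggle moderateness of $(e_\varepsilon)$, the pointwise idempotent identity, negligibility of $(x_\varepsilon e_\varepsilon)$, and nontriviality of $e$ simultaneously, and all four rest on choosing the sequence $\varepsilon_n$ so that $|x_{\varepsilon_n}|$ decays faster than every fixed power of $\varepsilon_n$.
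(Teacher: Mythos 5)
The paper does not prove this theorem; it is quoted from \cite{AS} with no argument supplied, so there is nothing internal to compare against. Your proof is correct and is essentially the standard one from that reference: the dichotomy on whether a representative eventually satisfies $|\hat{x}(\varepsilon)|\geq\varepsilon^{m}$ is exhaustive, Case (A) yields the inverse directly, Case (B) produces the nontrivial idempotent $\chi_{S}$ with $S=\{\varepsilon_{n}\}$ (note $S\in\mathcal{S}$, consistent with the paper's description of $\mathcal{B}(\OK)$), and your density and openness arguments correctly translate the sharp-norm estimates into the pointwise bounds needed to invoke Case (A).
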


\begin{theorem}{\rm \cite{AS}}  An element $x\in \OK$ is a unit if and only is there exists $a\geq 0$ such that $|\hat{x}(\varepsilon )|\geq \varepsilon^a$, for $\varepsilon$ small enough.
\end{theorem}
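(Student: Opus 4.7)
The plan is a direct representative-level argument, proving both implications.

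For the forward direction, I would take $x$ with an inverse $y \in \OK$ and pick moderate representatives $(x_\varepsilon), (y_\varepsilon)$. The equality $xy = 1$ says $x_\varepsilon y_\varepsilon - 1$ is negligible, hence tends to $0$ faster than any power of $\varepsilon$; in particular $|x_\varepsilon y_\varepsilon - 1| \leq 1/2$ for $\varepsilon$ small, giving $|x_\varepsilon y_\varepsilon| \geq 1/2$. Since $y$ is moderate there exists $N \in \N$ and $C > 0$ with $|y_\varepsilon| \leq C \varepsilon^{-N}$ on a small interval, so
\[
|x_\varepsilon| \;\geq\; \frac{1}{2|y_\varepsilon|} \;\geq\; \frac{1}{2C}\,\varepsilon^{N}.
\]
Absorbing the constant $1/(2C)$ into one extra power of $\varepsilon$ (using $\varepsilon < 1/(2C)$ eventually), I get $|x_\varepsilon| \geq \varepsilon^{N+1}$, so $a := N+1$ works.

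For the converse, assuming $|\hat{x}(\varepsilon)| \geq \varepsilon^a$ for $\varepsilon < \varepsilon_0$, I would define a representative of the candidate inverse by $y_\varepsilon := \hat{x}(\varepsilon)^{-1}$ when $\varepsilon < \varepsilon_0$ and $y_\varepsilon := 0$ otherwise. The hypothesis yields $|y_\varepsilon| \leq \varepsilon^{-a}$ on the relevant interval, so $(y_\varepsilon)$ is moderate and determines an element $y \in \OK$. By construction $x_\varepsilon y_\varepsilon - 1 = 0$ for all sufficiently small $\varepsilon$, a fortiori negligible, so $xy = 1$ and $x \in \mathrm{Inv}(\OK)$.

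Before assembling these two halves, I would record the (standard) observation that the condition ``$\exists a \geq 0 : |\hat{x}(\varepsilon)| \geq \varepsilon^a$ for small $\varepsilon$'' is independent of the chosen representative: replacing $\hat{x}$ by $\hat{x} + n$ with $n$ negligible gives, for any $m$, $|n(\varepsilon)| \leq \varepsilon^{a+1}$ eventually, so $|\hat{x}(\varepsilon)+n(\varepsilon)| \geq \varepsilon^a - \varepsilon^{a+1} \geq \tfrac12 \varepsilon^a \geq \varepsilon^{a+1}$, and the inequality persists with $a$ increased by one. This guarantees both the statement and the constructions above make sense at the level of equivalence classes.

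The only genuinely delicate point is the handoff between the pointwise lower bound and moderateness of the inverse; no step is deep, but one has to keep straight that constants get absorbed into extra powers of $\varepsilon$ and that the modification of representatives on a small neighborhood of $0$ does not affect the class in $\OK$. I do not foresee a serious obstacle.
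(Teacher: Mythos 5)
Your argument is correct and complete: both implications, the absorption of constants into extra powers of $\varepsilon$, and the check that the lower-bound condition is representative-independent are all handled properly. Note that the paper itself gives no proof of this statement --- it is quoted from \cite{AS} --- and your representative-level argument is precisely the standard one given there, so there is nothing further to compare.
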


Let ${{\mathcal{S}}}:= \{S\subset \I |\  0 \in \overline{S}\bigcap \overline{S^c}\}$ where the bar denote topological closure. We than denote by $P_*(\mathcal{S})$ the set of all subsets $\mathcal{F}$ of $\mathcal{S}$ which are stable under finite union and such that if $S\in \mathcal{S}$ then either $S$ or $S^c$ belongs to $\mathcal{F}$. By $g(\mathcal{F})$ we denote the ideal generated by the characteristic functions of elements of $\mathcal{F}$.

\begin{theorem}{\rm \cite{AS}}
Let $\mathcal{P}\lhd \OK$ be a prime ideal. Then \\
{\bf{1.}} There exists ${\mathcal{F}}_0 \in P_*(\mathcal{S})$ such that $g({\mathcal{F}}_0 )\subset {\mathcal{P}}$.\\
{\bf{2.}}   $\{ \ \overline{g(\mathcal{F})}\ |\ {\mathcal{F}}\in P_*(\mathcal{S})\}$ is the set of all maximal ideals of $\ \OK$.\\
\end{theorem}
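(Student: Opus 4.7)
The plan is as follows. For Part 1, every $S\in\mathcal{S}$ produces an idempotent $e_S\in\OK$ (the class of $\chi_S$) satisfying $e_S+e_{S^c}=1$ and $e_S e_{S^c}=0$. Primeness of $\mathcal{P}$ forces at least one of $e_S$, $e_{S^c}$ into $\mathcal{P}$ for each $S$. Define
$$\mathcal{F}_0 := \{S\in\mathcal{S} : e_S\in\mathcal{P}\}.$$
The ``either $S$ or $S^c$'' property defining $P_*(\mathcal{S})$ is then immediate, and closure under finite unions follows from the Boolean identity $e_{S\cup T}=e_S+e_T-e_S e_T$. Thus $\mathcal{F}_0\in P_*(\mathcal{S})$, and $g(\mathcal{F}_0)\subset\mathcal{P}$ by construction.

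For Part 2, I would first note that every maximal ideal $\mathcal{M}$ is prime and closed, closedness following from the fact that $Inv(\OK)$ is open (Fundamental Theorem). Part 1 then yields some $\mathcal{F}_0\in P_*(\mathcal{S})$ with $\overline{g(\mathcal{F}_0)}\subset\mathcal{M}$, so to conclude $\mathcal{M}=\overline{g(\mathcal{F}_0)}$ it suffices to show that $\overline{g(\mathcal{F})}$ is a proper maximal ideal for every $\mathcal{F}\in P_*(\mathcal{S})$; this is the substantive content of the theorem.

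Properness of $\overline{g(\mathcal{F})}$ reduces to $1\notin\overline{g(\mathcal{F})}$, which is ensured by the condition $0\in\overline{S^c}$ imposed on each $S\in\mathcal{S}$, together with stability of this property under finite unions and ultrametric limits. For maximality, take $x\notin\overline{g(\mathcal{F})}$. If $x$ is a unit we are done, so assume otherwise. The Fundamental Theorem furnishes an idempotent $e_S$ with $xe_S=0$; by refining the argument at the level of representatives (and using the invertibility criterion on the complement) one may simultaneously produce $y\in\OK$ with $xy=e_{S^c}$ and $ye_S=0$. The defining property of $\mathcal{F}$ forces $S\in\mathcal{F}$: otherwise $S^c\in\mathcal{F}$, whence $e_{S^c}\in g(\mathcal{F})$ and $x=x(e_S+e_{S^c})=xe_{S^c}\in g(\mathcal{F})$, a contradiction. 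Thus $e_S\in g(\mathcal{F})\subset\overline{g(\mathcal{F})}$, and
$$(x+e_S)(y+e_S)=xy+xe_S+ye_S+e_S=e_{S^c}+e_S=1,$$
so $x+e_S$ is a unit lying in $(x)+\overline{g(\mathcal{F})}$, forcing this ideal to coincide with $\OK$.

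The principal obstacle is the sharper form of the Fundamental Theorem used above --- pairing a non-unit $x$ with a compatible idempotent $e_S$ and a ``partial inverse'' $y$ satisfying $xy=e_{S^c}$ and $ye_S=0$. This is best isolated as a preparatory lemma, proved by working with representatives: define $S$ via the $\varepsilon$-set on which $|x_\varepsilon|$ drops below every power of $\varepsilon$, invert $x$ off $S$ using the invertibility criterion, and extend by zero across $S$. Once this decomposition is in hand, the remainder of the argument reduces to routine manipulation of the Boolean algebra of idempotents parametrized by $P_*(\mathcal{S})$.
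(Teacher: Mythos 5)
This theorem is quoted from \cite{AS} and the paper supplies no proof of its own, so your argument has to be judged on its own terms. Part 1 is essentially right: taking $\mathcal{F}_0=\{S\in\mathcal{S}:\chi_S\in\mathcal{P}\}$, primeness applied to $\chi_S\chi_{S^c}=0$ gives the dichotomy, and for closure under unions the only point worth making explicit is that $S\cup T$ really lies in $\mathcal{S}$ (if $0\notin\overline{(S\cup T)^c}$ then $\chi_{S\cup T}=1\in\mathcal{P}$, impossible). Properness of $\overline{g(\mathcal{F})}$ is also fine: every element of $g(\mathcal{F})$ is a multiple of some $\chi_S$ with $S\in\mathcal{F}$, hence has distance at least $\|\chi_{S^c}\|=1$ from $1$.

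The gap is precisely the ``sharper form of the Fundamental Theorem'' on which your maximality step rests. It is false that every non-unit $x$ admits an idempotent $\chi_S$ with $x\chi_S=0$ together with $y$ satisfying $xy=\chi_{S^c}$. Take a representative $\hat{x}(\varepsilon)=\varepsilon^{f(\varepsilon)}$ where $f$ is continuous and sweeps through all values between $1$ and $\log(1/\varepsilon)$ in every neighbourhood of $0$. If $xy=\chi_{S^c}$ held, moderateness of $y$ would force $f\leq a$ on $S^c$ near $0$ for some $a$, so $S$ would contain $\{f>a\}$, which contains points with $f(\varepsilon)=a+1$ arbitrarily close to $0$; there $|\hat{x}(\varepsilon)|=\varepsilon^{a+1}$ is not negligibly small, so $x\chi_S\neq0$. (Relatedly, your recipe ``the $\varepsilon$-set on which $|\hat{x}(\varepsilon)|$ drops below every power of $\varepsilon$'' does not define a set: for fixed $\varepsilon<1$ the powers $\varepsilon^a$ tend to $0$ as $a\to\infty$.) The repair is to use the hypothesis $x\notin\overline{g(\mathcal{F})}$ quantitatively instead of the Fundamental Theorem: since $d(x,g(\mathcal{F}))=\inf_{S\in\mathcal{F}}\|x\chi_{S^c}\|\geq e^{-a}$ for some $a$, put $T=\{\varepsilon:|\hat{x}(\varepsilon)|>\varepsilon^{a+1}\}$. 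After disposing of the degenerate cases ($0\notin\overline{T}$ contradicts $d(x,g(\mathcal{F}))\geq e^{-a}$ because $0\in g(\mathcal{F})$, while $0\notin\overline{T^c}$ makes $x$ a unit), $T\in\mathcal{S}$; and $T\in\mathcal{F}$ would give $\|x\chi_{T^c}\|\leq e^{-(a+1)}<e^{-a}$, so in fact $T^c\in\mathcal{F}$ and $\chi_{T^c}\in g(\mathcal{F})$. Finally $|\hat{x}(\varepsilon)+\chi_{T^c}(\varepsilon)|\geq\min(\varepsilon^{a+1},1-\varepsilon^{a+1})\geq\varepsilon^{a+1}$ for small $\varepsilon$, so $x+\chi_{T^c}$ is a unit by the invertibility criterion of Theorem 2.2, and this unit lies in $(x)+\overline{g(\mathcal{F})}$ --- which is exactly the conclusion your computation $(x+e_S)(y+e_S)=1$ was designed to reach, now obtained without the false lemma.
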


In \cite{ASS} it is proved that $g({\mathcal{F}}_0 )$ is in fact a minimal prime ideal of $\ \OK$. In general $g({\mathcal{F}}_0 )$ is not closed and so $\OK$ is not Von Neumann regular.

If $I\lhd \OK $ is a maximal ideal then $\K$ is algebraically closed in $\OK /I$ and from this it follows that ${\mathcal{B}}(\OK )$, the set of idempotents of $\OK$, does not depend on $\K$, i.e., ${\mathcal{B}}({\overline{\C }} )={\mathcal{B}({\overline{\R }} )}$ (see [\cite{AS}, section IV]). Moreover, in \cite{ASS} it is proved   that ${\mathcal{B}}({\overline{\C }} )=\{\chi_A|\ A\in {\mathcal{S}}\}$, where $\chi_A$ denotes the characteristic function of the set $A$.

  For the sake of completeness we recall the order structure of $\overline{\R}$ defined in \cite{ASS}.

\begin{lemma}
\label{order}

For a given $x\in {\overline{\R}}$ the following are equivalent:\\
{\bf{(i)}}  Every  representative ${\hat{x}}\ $of $x$ satisfies the condition\\
{\bf{(*)}} $\forall b>0, \exists \eta_b\in \I$ such that ${\hat{x}}(\varepsilon )\geq -\varepsilon^b$, whenever $0<\varepsilon <\eta_b$.\\
{\bf{(ii)}}   There exists a representative ${\hat{x}}$ of $x$ satisfying {\bf{(*)}}.\\
{\bf{(iii)}} There exists a representative $x_*$ of $x$ such that $x_*(\varepsilon )\geq 0, \forall \varepsilon \in \I$.
\end{lemma}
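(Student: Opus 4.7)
The proof is a cycle around three conditions, and the only nontrivial direction is the construction of a pointwise nonnegative representative from one that is only ``eventually nonnegative up to infinitesimals''. The implications (i) $\Rightarrow$ (ii) and (iii) $\Rightarrow$ (ii) are immediate: the first because (*) is asserted of every representative, the second because a pointwise nonnegative representative satisfies $x_*(\varepsilon) \geq 0 > -\varepsilon^b$ for every $b > 0$ and every $\varepsilon \in \mathbf{I}$.

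For (ii) $\Rightarrow$ (i), my plan is to use that any two representatives differ by a negligible net. Let $\hat{x}$ satisfy (*) and let $\hat{y}$ be any other representative, so that $n := \hat{y}-\hat{x}$ is negligible: for every $q>0$ there is $\mu_q$ with $|n(\varepsilon)| \leq \varepsilon^q$ for $0<\varepsilon<\mu_q$. Given $b>0$, choose $\varepsilon$ smaller than $\eta_{b+1}$, smaller than $\mu_{b+1}$, and at most $1/2$; then $\hat{y}(\varepsilon) \geq -\varepsilon^{b+1} - \varepsilon^{b+1} = -2\varepsilon \cdot \varepsilon^{b} \geq -\varepsilon^{b}$. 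So (*) holds for $\hat{y}$.

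The main step is (ii) $\Rightarrow$ (iii), and the construction I have in mind is pointwise truncation: set
\[
x_*(\varepsilon) := \max\bigl(\hat{x}(\varepsilon),\,0\bigr).
\]
Clearly $x_*(\varepsilon) \geq 0$ for all $\varepsilon \in \mathbf{I}$. Moderateness is inherited since $|x_*(\varepsilon)| \leq |\hat{x}(\varepsilon)|$. The key point is that $x_*$ and $\hat{x}$ define the same class: one has
\[
\bigl|x_*(\varepsilon) - \hat{x}(\varepsilon)\bigr| \;=\; \max\bigl(-\hat{x}(\varepsilon),\,0\bigr),
\]
and applying (*) to $\hat{x}$ with exponent $b=m$ gives $\hat{x}(\varepsilon) \geq -\varepsilon^m$ for $\varepsilon<\eta_m$, which forces $\max(-\hat{x}(\varepsilon),0) \leq \varepsilon^m$. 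As $m$ is arbitrary, $x_*-\hat{x}$ is negligible, so $x_*$ is a representative of $x$ of the required type.

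The step I expect to be most delicate is justifying that the truncation $\max(\hat{x},0)$ is an admissible representative; this is only an issue if the ambient framework imposes regularity (e.g.\ smoothness in $\varepsilon$) on representatives. For Colombeau generalized numbers $\overline{\mathbb{R}}$ in the sense of \cite{AS,ASS}, representatives are merely nets indexed by $\mathbf{I}$, so $\max(\hat{x},0)$ is allowed and the argument as stated is complete. If one works with a variant requiring smooth nets, one would replace $\max$ by a smoothed variant $\varphi_\varepsilon \circ \hat{x}$ for a suitable mollifier $\varphi_\varepsilon$ approximating $t \mapsto \max(t,0)$ on the scale $\varepsilon^m$; the negligibility estimate goes through in the same way.
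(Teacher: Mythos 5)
Your proof is correct. Note that the paper itself gives no proof of this lemma: it is merely recalled from \cite{ASS} ``for the sake of completeness,'' so there is nothing internal to compare against. Your cycle of implications is the natural one, and each step checks out: (i)$\Rightarrow$(ii) and (iii)$\Rightarrow$(ii) are immediate, your (ii)$\Rightarrow$(i) argument via the negligibility of the difference of two representatives (with the $\varepsilon\le 1/2$ trick to absorb the factor $2$) is sound, and the truncation $x_*=\max(\hat{x},0)$ for (ii)$\Rightarrow$(iii) is admissible here because representatives in $\overline{\R}=\mathcal{E}_M(\R)/\mathcal{N}(\R)$ are arbitrary moderate nets on $\I$ with no regularity in $\varepsilon$, so your worry about smoothing is moot in this framework.
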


\begin{definition}
\label{porder}

An element $x\in \overline{\R }$ is said to be non-negative,  quasi-positive or q-positive, if it has a representative satisfying one of the conditions of Lemma~\ref{order}. We shall denote this by $x\geq 0$. We shall say also that $x$ is non-positive or q-negative if $-x$ is q-positive. If $\ y\in \overline{\R }$ is another element then we write $x\geq y$ if $\ x-y$ is q-positive and $x\leq y$ if $\ y-x$ is q-positive.

\end{definition}

\begin{proposition}[Convexity of ideals]
\label{conv}

Let $J$ be an ideal of $\ \OK$ and $x,y \in \OK$. Then\\
{\bf{1.}} $x\in J$ if and only if $\ |x|\in J$.\\
{\bf{2.}} If $\ x\in J$ and $|y|\leq |x|$ then $y\in J$.\\
{\bf{3.}} If $\ \K =\R$, $x\in J$ and $0\leq y\leq x$ then $y\in J$.

\end{proposition}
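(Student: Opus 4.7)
I plan to prove (2) first, since (1) and (3) will follow quickly from it. For (2), I will exhibit $r\in\OK$ with $y=rx$ and $|r|\le 1$, so that $y$ lies in the principal ideal $(x)\subseteq J$.

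Starting from arbitrary representatives $\hat x,\hat y$, Lemma \ref{order} tells me that the q-positivity of $|x|-|y|$ provides a negligible net $\hat n\ge 0$ satisfying $|\hat y(\varepsilon)|\le |\hat x(\varepsilon)|+\hat n(\varepsilon)$ for every $\varepsilon\in\I$. I would then truncate $\hat y$ in modulus while preserving its phase, setting
\[
\hat y'(\varepsilon):=\begin{cases}
\hat y(\varepsilon) & \text{if }|\hat y(\varepsilon)|\le|\hat x(\varepsilon)|,\\
\hat y(\varepsilon)\,|\hat x(\varepsilon)|/|\hat y(\varepsilon)| & \text{otherwise.}
\end{cases}
\]
A direct check gives $|\hat y'|\le|\hat x|$ pointwise and $|\hat y-\hat y'|\le \hat n$, so $\hat y'$ is still a representative of $y$. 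Next I would set $\hat r(\varepsilon):=\hat y'(\varepsilon)/\hat x(\varepsilon)$ when $\hat x(\varepsilon)\ne 0$ and $\hat r(\varepsilon):=0$ otherwise; the pointwise bound forces $\hat y'(\varepsilon)=0$ wherever $\hat x(\varepsilon)=0$, so $\hat r\hat x=\hat y'$ holds throughout $\I$. Since $|\hat r|\le 1$, the class $r:=[\hat r]$ belongs to $\OK$ and satisfies $rx=y$, so $y\in(x)\subseteq J$.

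Both implications of (1) then drop out of (2) via the trivial identity $\bigl||x|\bigr|=|x|$: for $\Rightarrow$, apply (2) with $y:=|x|$; for $\Leftarrow$, apply (2) with $|x|$ playing the role of $x$ and $y:=x$. For (3), Lemma \ref{order}(iii) lets me pick a representative $\hat y\ge 0$ of $y$ together with a non-negative representative of $x-y$, whose sum is a representative $\hat x\ge\hat y\ge 0$ of $x$. Then $|x|=x$ and $|y|=y$, so the hypothesis $y\le x$ upgrades to $|y|\le|x|$ and (2) finishes the job.

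The main technical obstacle is the truncation step: the hypothesis $|y|\le|x|$ holds only up to a negligible net, yet I need a strictly pointwise inequality so that $\hat y'/\hat x$ stays bounded and hence moderate. The fact that the truncated $\hat y'$ differs from $\hat y$ by a negligible net (and not by something larger) is the key content of the argument, while the potential singularity where $\hat x$ vanishes is handled automatically by the pointwise bound $|\hat y'|\le|\hat x|$, which forces $\hat y'$ to vanish there as well.
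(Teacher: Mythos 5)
Your proof is correct, and since the paper states Proposition~\ref{conv} without proof (deferring to \cite{ASS}), there is nothing to diverge from: the argument you give --- truncating $\hat y$ so that $|\hat y'|\le|\hat x|$ pointwise at the cost of a negligible perturbation, and then writing $y=rx$ with $|\hat r|\le 1$ so that $y$ lies in the principal ideal $(x)$ --- is exactly the standard one, and your reductions of (1) and (3) to (2) via $\bigl||x|\bigr|=|x|$ and the choice of non-negative representatives are sound. The only point worth making explicit is that when $\hat x(\varepsilon)=0$ the bound $|\hat y'(\varepsilon)|\le|\hat x(\varepsilon)|$ forces $\hat y'(\varepsilon)=0$, which you already note, so $\hat r\hat x=\hat y'$ holds everywhere and $\hat r$ is moderate because it is bounded.
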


Let $r\in  \R$. Then  $\alpha_r\in \OR$ is the element having $\varepsilon \rightarrow \varepsilon^r$ as a representative.  It has the property that $||\alpha_r||=e^{-r}$ and $||\alpha_r x||=||\alpha_r||\cdot ||x||$, for any $x\in \OK$.  With this we have  that an element $x\in \OK$ is a unit if and only if there exists $r\in \R$ such that $x\geq \alpha_r$.

In \cite{ASS} it is also proved that for $0<x\in \OK$ there exists $y\in \OK$ such that $x=y^2$. In the next section we will   use freely some of the results in this section.

%***************************************
\section{{\bf The   Topological   Algebra of Colombeau Generalized Quaternions}}

Here $(\Hq , |  \cdot  |)$ will denote the classical ring of real quaternion with its usual metric and basis $\{1,i,j,k\}$ and, unless stated otherwise, $\K=\R$. Moreover, if ${\bf{R}}$ is a ring then $\OHq (${\bf{R}}$)$ denotes the quaternion algebra over ${\bf{R}}$

\begin{definition} A function $\hat{x}:\I\rightarrow \Hq$ is moderate  if there exists $a\in
\R$ such that $\lim\limits_{\varepsilon\rightarrow 0}
\frac{|\hat{x}(\varepsilon)|}{\varepsilon^{a}}=0$.
\end{definition}

\noindent Let $\mathcal{E}_{M}(\Hq)=\{\hat{x}:\I\rightarrow \Hq\ |\ \hat{x}\, \,
is\ moderate\}$ and $\mathcal{N}(\Hq)=\{ \hat{x}\in \mathcal{E}_{M}(\Hq)\  |\  \lim\limits_{\varepsilon\rightarrow 0}
\frac{|\hat{x}(\varepsilon)|}{\varepsilon^{a}}=0, \forall a\in \R \}$. It is easily seen that  if
$\hat{x}(\varepsilon)=\hat{x_{0}}(\varepsilon)+\hat{x_{1}}(\varepsilon)i+\hat{x_{2}}(\varepsilon)j
+\hat{x_{3}}(\varepsilon)k \in \mathcal{E}_{M}(\Hq) $ then  $\lim\limits_{\varepsilon\rightarrow
0}\frac{|\hat{x}(\varepsilon)|}{\varepsilon^{a}}=0$ if and only if
$\lim\limits_{\varepsilon\rightarrow
0}\frac{|\hat{x}_{i}(\varepsilon)|}{\varepsilon^{a}}=0, \forall i$ if and
only if $\hat{x}_{i}\in \mathcal{E}_{M}(\R)$, for all $i\in
\{0,1,2,3\}$.  Hence it is clear that  $\mathcal{E}_{M}(\Hq)\cong  \Hq(\mathcal{E}_{M}(\R))$.

In the same way we obtain that  $\mathcal{N}(\Hq)\cong \Hq(\mathcal{N}(\R))$. Therefore we have that
$ \mathcal{E}_{M}(\Hq)/\mathcal{N}_{M}(\Hq)\cong
\Hq(\mathcal{E}_{M}(\R)/\mathcal{N}_{M}(\R))=\Hq({\overline{\R}})$. We denote $\Hq(\overline{\R})$ by $\overline{\Hq}$. Let $x=x_{0}+x_{1}i+x_{2}j+x_{3}k\in \overline{\Hq}$. Its conjugate $\bar{x}:=x_{0}-(x_{1}i+x_{2}j+x_{3}k)$ and its   norm  $n(x):=\sqrt{x_{0}^{2}+x_{1}^{2}+x_{2}^{2}+x_{3}^{2}}=\sqrt{x\bar{x}}$.

\begin{lemma}
\label{norm}
 Let $x\in \overline{\Hq}$. Then
 \begin{enumerate}
 \item $x\in Inv(\overline{\Hq})$ if and only if
$\ n(x)\in Inv(\overline{\R})$. In this case,
$x^{-1}=(n(x))^{-1}\bar{x}$.

\item An element $x\in \overline{\Hq}$  is a zero divisor if and only
if n(x) is a zero divisor.
\end{enumerate}
In particular we have that an element of $\ \OHq$ is either a unit or a zero divisor.
\end{lemma}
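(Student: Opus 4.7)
The plan is to reduce both statements to the Fundamental Theorem (Theorem 2.1) applied in $\overline{\R}$, via the identity $x\bar{x} = \bar{x}x = n(x)^2$ and the fact that quaternionic conjugation $y \mapsto \bar{y}$ is an anti-involution of $\overline{\Hq}$ that fixes $\overline{\R}$ pointwise.

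For part (1), I would argue both directions separately. If $n(x) \in Inv(\overline{\R})$, then $n(x)^2 = x\bar{x}$ is a unit in $\overline{\R}$, and a direct computation shows that $y := n(x)^{-2}\bar{x}$ satisfies $xy = n(x)^{-2}\,x\bar{x} = 1$ and, using $\bar{x}x = n(x)^2$ as well, $yx = 1$; thus $x$ is a unit. (The formula displayed in the statement should therefore be read with $n(x)^{-2}$ in place of $n(x)^{-1}$, or equivalently one absorbs a factor of $n(x)^{-1}$ into $\bar{x}/n(x)$.) Conversely, if $xy = yx = 1$, applying conjugation gives $\bar{y}\bar{x} = \bar{x}\bar{y} = 1$, so $\bar{x} \in Inv(\overline{\Hq})$; hence $n(x)^2 = x\bar{x}$ is invertible in $\overline{\Hq}$, but this element lies in the commutative subring $\overline{\R}$, and there $n(x)^2 = n(x)\cdot n(x)$, so $n(x)$ must itself be a unit of $\overline{\R}$.

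For part (2) and the ``in particular'' assertion, the key is to upgrade the Fundamental Theorem from $\overline{\R}$ to $\overline{\Hq}$. Suppose $x \notin Inv(\overline{\Hq})$; by part (1), $n(x) \notin Inv(\overline{\R})$, so Theorem 2.1 provides a nonzero idempotent $e \in \overline{\R}$ with $n(x)e = 0$. Multiplying $x\bar{x} = n(x)^2$ on the right by $e$ yields $x(\bar{x}e) = 0$. Since $e \in \overline{\R}$ is central in $\overline{\Hq}$ and fixed by conjugation, one checks that $\bar{x}e = 0$ forces $xe = \overline{\bar{x}e} = 0$; hence in both cases ($\bar{x}e \neq 0$ or $\bar{x}e = 0$) we have produced a nonzero element annihilated by $x$. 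Thus every non-unit of $\overline{\Hq}$ is a zero divisor, which proves the ``in particular'' clause and the implication ``$n(x)$ zero divisor $\Rightarrow$ $x$ zero divisor'' in (2). The reverse direction of (2) is immediate from the contrapositive of (1) combined with the analogous dichotomy in $\overline{\R}$.

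No step is genuinely hard; the only item that needs care is the conjugation bookkeeping in the last paragraph, specifically the verification that $e$ being a real central idempotent lets us trade $xe$ for $\bar{x}e$ freely via the anti-involution, so that one and the same nonzero element witnesses $x$ as a zero divisor. Everything else is a direct application of the identity $x\bar{x} = n(x)^2$ and Theorem 2.1.
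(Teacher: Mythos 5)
Your proof is correct and follows essentially the same route as the paper: the substantive claim (part (2) and the ``in particular'' clause) is reduced to the Fundamental Theorem of $\OK$ via the identity $x\bar{x}=n(x)^{2}$, producing an idempotent $e$ with $n(x)e=0$ and then splitting into the cases $\bar{x}e\neq 0$ and $\bar{x}e=0$ exactly as the paper does (the paper treats part (1) as not requiring proof). Your remark that, with the paper's definition $n(x)=\sqrt{x\bar{x}}$, the displayed inverse formula should read $x^{-1}=n(x)^{-2}\bar{x}$ correctly identifies a typo in the statement rather than a flaw in your argument.
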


\begin{proof} Only the second  statement requires a proof. Suppose that $n(x)$ is a zero divisor. Hence there  exists an idempotent $e\in \overline{\R}$   such that $n(x)e=0$. From this it follows that
$0=n(x)^2e=\bar{x}xe=(xe)\bar{x}=x(e\bar{x})=0$. If $e\bar{x}\neq 0$, then $xe\bar{x}=0$ and it follows that $x$ is a
zero divisor. On the other hand if $\bar{x}e=0$, then $xe=0$. So, $x$ is a zero
divisor.\end{proof}

It follows from the lemma above that if $x=x_{0}+x_{1}i+x_{2}j+x_{3}k\in \overline{\Hq}$ and one of the $x_i$'s is a unit then $x$ is a unit.

For an element $x\in \overline{\Hq}$,  let $A(x):=\{a\in
\mathbb{R}|\  \lim\limits_{\varepsilon\rightarrow
0}\frac{|\hat{x}(\varepsilon)|}{\varepsilon^{a}}=0\}$ and
$V(x):=sup(A(x))$. It is easily seen that either $A(x)=\R$ or there exists $a\in \R$ such that either $A(x) = ]-\infty ,a[$ or $ A(x)= \subseteq \ ]-\infty , a]$. Define $||x||=e^{-V(x)}$ and $d(x,y):=||x-y||$. It is easily seen that all definitions above make sense and that $\ d\ $ defines an ultra metric on $\overline{\Hq}$. Denote by  $d_\pi$ the product
metric on $\overline{\Hq}$ induced by the topology of $\ \OK$.

\begin{theorem}
\label{homeo}
$(\overline{\Hq},d)$ and $(\overline{\Hq}, d_\pi)$ are
homeomorphic as topological spaces, i.e. $d$ and $d_\pi$ define the same topology on $\overline{\Hq}$.\end{theorem}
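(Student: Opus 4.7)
The plan is to reduce everything to a pointwise identity of norms. Specifically, I would prove
\[
\|x\| \;=\; \max_{0\le i \le 3}\|x_i\|
\]
for every $x = x_0 + x_1 i + x_2 j + x_3 k \in \overline{\Hq}$. Once this is established, $d(x,y) = \|x-y\| = \max_i \|x_i - y_i\|$ coincides with the $\ell^\infty$ product metric on $\overline{\R}^4$, which in any finite product induces precisely the product topology $d_\pi$.

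The key step is the analysis of the sets
\[
A(y) = \Bigl\{ a \in \R \;:\; \lim_{\varepsilon\to 0} \frac{|\hat y(\varepsilon)|}{\varepsilon^a} = 0 \Bigr\}
\]
defined both for $y \in \overline{\R}$ and $y \in \overline{\Hq}$. I would first observe that each $A(y)$ is downward-closed in $\R$ (since for $a' < a$ one has $\varepsilon^{a'} > \varepsilon^a$ near $0$), so $V(y) = \sup A(y)$ determines $A(y)$ up to whether the endpoint is included. Using the standard inequalities
\[
\max_{0\le i \le 3}|\hat x_i(\varepsilon)| \;\le\; |\hat x(\varepsilon)| \;\le\; \sum_{i=0}^{3}|\hat x_i(\varepsilon)|,
\]
which are exactly what underlies the componentwise characterization of moderateness already noted in the text just before Lemma \ref{norm}, I would conclude the set equality
\[
A(x) \;=\; \bigcap_{i=0}^{3} A(x_i).
\]
Since the intersection of downward-closed sets with suprema $V(x_i)$ has supremum $\min_i V(x_i)$, this yields $V(x) = \min_{0\le i \le 3} V(x_i)$, and exponentiating gives the claimed equality $\|x\| = \max_i \|x_i\|$.

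Finally, applying the identity to $x-y$ gives $d(x,y) = \max_i d(x_i, y_i)$, which is a well-known equivalent to any reasonable product metric $d_\pi$ on $\overline{\R}^4$ (balls of one are contained in balls of the other, up to a factor of $4$ or so). Thus the two metrics define the same topology on $\overline{\Hq}$.

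I do not anticipate a major obstacle here: the content of the theorem is essentially the observation that the ultrametric $d$ is, up to the identification $\overline{\Hq} \cong \Hq(\overline{\R})$, the sup of the four component ultrametrics. The only mild subtlety is the handling of the possible endpoint of $A(y)$ when passing from $V(y)$ to $\sup \bigcap A(x_i)$, but because downward-closedness survives intersection and the $\sup$ of the intersection equals $\min_i V(x_i)$ regardless of endpoints, this causes no trouble.
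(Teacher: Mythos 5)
Your proof is correct and follows essentially the same route as the paper's: both arguments rest on the observation that $\lim_{\varepsilon\to 0}|\hat x(\varepsilon)|/\varepsilon^{a}=0$ holds if and only if it holds for each component, i.e.\ $A(x)=\bigcap_{i}A(x_i)$ and hence $V(x)=\min_i V(x_i)$. You package this as the explicit identity $\|x\|=\max_i\|x_i\|$, which is a slightly cleaner (and marginally stronger) formulation than the paper's direct ball-inclusion argument, but the mathematical content is the same.
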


\begin{proof} Let $x=x_{0}+x_{1}i+x_{2}j+x_{3}k\in (\overline{\Hq},d)$ and
$r>0$ and let $B_{r}(x)=\{y\in \Hq \ |\ d(x-y)<r\}$. If $y=y_{0}+y_{1}i+y_{2}j+y_{3}k\in B_r(x)$ then
  $V(x-y)>ln(1/r)$. Hence $ln(1/r)\in A(x-y)$ and so if
$a<V(x-y)$ then $ \lim\limits_{\varepsilon\rightarrow
0 }\frac{|\hat{x}-\hat{y}|}{\varepsilon^{a}}=0$; but this occurs  if and only if
$ \lim\limits_{\varepsilon\rightarrow
0}\frac{|\hat{x}_{i}-\hat{y}_{i}|}{\varepsilon^{a}}=0 \forall
i\in \{0,1,2,3\}$. So, $V(x_{i}-y_{i})\geq a>ln(1/r)$ $\forall i\in \{0,1,2,3\}$ and  therefore $d_{\pi}(x,y)<r$. Since the arguments above are easily reversible, the proof is
complete.\end{proof}

\begin{corollary}     $(\overline{\Hq},d)$ is a complete metric algebra.\end{corollary}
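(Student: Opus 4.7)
The plan is to reduce the statement to the known completeness of $(\overline{\R},\|\cdot\|)$, which is part of the standard background recalled above, and then to invoke Theorem~\ref{homeo}. I would first observe that $(\overline{\Hq},d_{\pi})$ is complete: as a set $\overline{\Hq}$ is identified with $\overline{\R}^{4}$ and $d_{\pi}$ is the product metric (the max of the four coordinate metrics), so any $d_{\pi}$-Cauchy sequence is componentwise Cauchy in $\overline{\R}$, and componentwise convergence assembles into a $d_{\pi}$-limit in $\overline{\Hq}$.

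Next I would transfer completeness from $d_{\pi}$ to $d$. A priori the homeomorphism of Theorem~\ref{homeo} is weaker than equality of Cauchy sequences. However, both $d$ and $d_{\pi}$ are translation-invariant ultrametrics arising from norm-like functions on $\overline{\Hq}$, so the Cauchy condition is topological (it only asks that $x^{(n)}-x^{(m)}\to 0$ in the common topology), and the Cauchy sequences of $d$ and $d_{\pi}$ coincide. Hence $(\overline{\Hq},d)$ is complete. Alternatively, a closer inspection of the proof of Theorem~\ref{homeo}, using the estimate $|\hat{x}(\varepsilon)|\le 2\max_{i}|\hat{x}_{i}(\varepsilon)|$ together with $|\hat{x}_{i}(\varepsilon)|\le|\hat{x}(\varepsilon)|$, gives the sharper identity $V(x)=\min_{0\le i\le 3}V(x_{i})$, whence $\|x\|=\max_{i}\|x_{i}\|$ and in fact $d=d_{\pi}$ on the nose, making the transfer tautological.

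For the metric-algebra part I would simply verify continuity of the operations. Addition is immediate from the ultrametric inequality. Multiplication is continuous because the four coordinates of $xy$ are bilinear polynomials in the eight coordinates of $x$ and $y$, and addition and multiplication are continuous in $(\overline{\R},\|\cdot\|)$. The main obstacle is really only the transfer of completeness from $d_{\pi}$ to $d$, for which I would prefer the sharper argument $d=d_{\pi}$, since it sidesteps any subtlety about homeomorphisms and completeness.
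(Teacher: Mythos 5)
Your proof is correct and follows the same basic route as the paper's: the paper's entire proof is ``This follows by Theorem~\ref{homeo} and a result of D. Scarpalezos which assures that $\OK$ is complete.'' What you add, and what the paper silently glosses over, is that a homeomorphism does not by itself transfer completeness, since completeness is a property of the metric (or uniformity), not of the topology. Your two patches are both valid: (i) for translation-invariant metrics on an additive group, the Cauchy condition depends only on the topology at $0$, so homeomorphic translation-invariant metrics have the same Cauchy sequences; and (ii) the sharper computation $A(x)=\bigcap_i A(x_i)$, hence $V(x)=\min_i V(x_i)$ and $\|x\|=\max_i\|x_i\|$, which gives $d=d_\pi$ outright (assuming $d_\pi$ is the max metric; for any other standard choice of product metric one still gets uniform equivalence) and makes the transfer immediate. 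Either patch upgrades Theorem~\ref{homeo} from a homeomorphism to a uniform equivalence, which is exactly what the corollary needs; your verification that the ring operations are continuous (coordinatewise bilinearity plus continuity of the operations of $\overline{\R}$) covers the ``metric algebra'' clause that the paper does not address at all. In short: same strategy, but your version closes a real, if minor, gap in the paper's one-line argument.
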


\begin{proof}This follows by
Theorem~\ref{homeo} and a result of D. Scarpalezos which assures that $\OK$ is complete.
\end{proof}

The following result shows that the unit group of $\OHq$ is very big.

\begin{theorem}
\label{unit}
 The unit group $Inv(\OHq )$ is an open and dense subset of $\ \overline{\Hq}$.\end{theorem}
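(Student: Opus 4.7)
The plan is to reduce both assertions to the corresponding facts about $Inv(\OK)$ supplied by the Fundamental Theorem of $\OK$, using Lemma~\ref{norm} as the bridge between $\overline{\Hq}$ and $\overline{\R}$ and Theorem~\ref{homeo} to pass freely between the product topology $d_\pi$ and the ultrametric $d$.

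For openness, I would introduce the map $\varphi:\overline{\Hq}\to\overline{\R}$ defined by $\varphi(y)=y\bar y=y_0^{2}+y_1^{2}+y_2^{2}+y_3^{2}$, which is polynomial in the coordinate functions and therefore continuous with respect to $d_\pi$, hence with respect to $d$ by Theorem~\ref{homeo}. Since an element of $\overline{\R}$ is a unit if and only if its square is, Lemma~\ref{norm} yields the set equality $Inv(\overline{\Hq})=\varphi^{-1}(Inv(\overline{\R}))$. Openness of $Inv(\overline{\Hq})$ then follows at once from the Fundamental Theorem applied to $\overline{\R}$, which asserts that $Inv(\overline{\R})$ is open.

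For density, fix $x=x_0+x_1i+x_2j+x_3k\in\overline{\Hq}$ together with an open neighborhood $U$ of $x$. By Theorem~\ref{homeo} the set $U$ is also open in the product topology, so it contains a product ball, and it suffices to perturb $x_0$ by a sufficiently small amount in $\overline{\R}$ while leaving $x_1,x_2,x_3$ fixed. I would invoke density of $Inv(\overline{\R})$ in $\overline{\R}$ to produce a unit $x_0'\in\overline{\R}$ making the perturbation fit, and then set $x'=x_0'+x_1i+x_2j+x_3k\in U$. To verify that $x'$ is actually a unit in $\overline{\Hq}$, I would compute $n(x')^{2}=(x_0')^{2}+x_1^{2}+x_2^{2}+x_3^{2}$ and exploit the fact that the last three summands are q-positive to conclude $n(x')^{2}\geq (x_0')^{2}$. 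Since $x_0'$ is a unit in $\overline{\R}$, the characterization of units via lower bounds by $\alpha_r$ recalled just before Section~3 produces $r\in\R$ with $(x_0')^{2}\geq\alpha_{2r}$; transitivity of the order then gives $n(x')^{2}\geq\alpha_{2r}$, which makes $n(x')^{2}$ a unit, hence $n(x')$ a unit, and Lemma~\ref{norm} finishes the argument.

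The main thing to be careful with is the chain of equivalences ``$x$ is a unit in $\overline{\Hq}$ $\Leftrightarrow$ $n(x)$ is a unit in $\overline{\R}$ $\Leftrightarrow$ $x\bar x$ is a unit in $\overline{\R}$ $\Leftrightarrow$ $x\bar x$ dominates some $\alpha_r$''; once this dictionary is assembled, the theorem reduces formally to the analogous statements for $\overline{\R}$. I anticipate no serious obstacle beyond bookkeeping.
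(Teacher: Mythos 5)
Your proof is correct, but it diverges from the paper's in a way worth noting, especially on the density half. For openness the underlying idea is the same --- pull back the openness of $Inv(\overline{\R})$ (given by the Fundamental Theorem) through the norm --- but your packaging is cleaner: you use the polynomial map $\varphi(y)=y\bar y=n(y)^2$, note that $Inv(\overline{\Hq})=\varphi^{-1}(Inv(\overline{\R}))$ because a unit of a commutative ring is exactly an element whose square is a unit, and conclude by continuity; the paper instead runs a sequential contradiction argument with $n(x)=\sqrt{x\bar x}$ and component-wise convergence via Theorem~\ref{homeo}. For density the routes are genuinely different: the paper assumes a ball $B_r(z)$ misses $Inv(\OHq)$ and tries to transport this through $n$ to contradict the density of $Inv(\OK)$ (a step whose preimage containment is stated rather loosely there), whereas you perturb a single coordinate $x_0$ to a nearby unit $x_0'$ of $\overline{\R}$ and verify directly that $x'=x_0'+x_1i+x_2j+x_3k$ is a unit via $n(x')^2\geq (x_0')^2\geq \alpha_{2r}$ and the $\alpha_r$-criterion for invertibility. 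Your density argument is in effect a quantitative form of the paper's own remark following Lemma~\ref{norm} (that $x$ is a unit as soon as one coordinate $x_i$ is), and it is the more self-contained and verifiable of the two; what it costs is only the small amount of order-theoretic bookkeeping with q-positivity and transitivity of $\geq$, all of which is available from Lemma~\ref{order} and the discussion preceding Section~3.
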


\begin{proof} Let $x\in Inv(\OHq )$, where $x=x_{0}+x_{1}i+x_{2}j+x_{3}k$. Then $n(x)=\sqrt{x\overline{x}}\in
Inv(\overline{\R})$. Thus, by the Fundamental Theorem of $\OK$,
there exists $r>0$ such that $B_{r}(n(x))\subseteq
Inv(\overline{\R})$. Suppose that $Inv(\overline{\Hq})$ is not an
open set. Hence, still by the same theorem, we have that  $\forall
n\in \mathbb{N}^{*}$, there exists $x_{n}\in B_{1/n}(x)$ such that
$n(x_{n})\notin Inv(\overline{\R})$. Hence, $x_{n}\rightarrow x$,
and so, by theorem~~\ref{homeo} $x_{ni}\rightarrow x_{i}$. It
follows that $n(x_{n})=\sum_{i=0}^{3}\sqrt{(x_{ni})^{2}}\rightarrow
\sum_{i=0}^{3}\sqrt{x_{i}^{2}}=n(x)$,   a contradiction.

We now prove the density.  Suppose that there exists $r\in \R$ and $z\in \OHq$ such that
$B_r(z)\bigcap Inv(\OHq )=\emptyset$. If $x=n(z)$ then, since the norm is obviously a continuous function, we have
that there exists an open ball $B_s(x)\subset \OK$ such that $n^{-1}(B_s(y))\subset B_r(z)$. But this, by
Lemma~\ref{norm},  contradicts the density of $Inv(\OK )$ in $\OK$
\end{proof}

\begin{corollary}
\label{maxid}
 Let $M$ be an ideal of $\ \overline{\Hq}$. If $M$ is a maximal
ideal of $\ \OHq$, then $M=\overline{M}$.\end{corollary}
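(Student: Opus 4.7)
The plan is to exploit the dichotomy from Lemma~\ref{norm} (every element of $\OHq$ is either a unit or a zero divisor) together with the openness part of Theorem~\ref{unit}. The key observation is that a proper ideal of $\OHq$ must consist entirely of non-units, so it is trapped inside the closed complement of $Inv(\OHq)$; the closure of such an ideal is therefore still proper, hence by maximality equal to the ideal itself.

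More precisely, I would proceed as follows. First I would note that since $(\OHq,d)$ is a topological algebra (addition, multiplication by a fixed element on either side, and scalar multiplication are all continuous, which is inherited from the corresponding property for $\OK$ via Theorem~\ref{homeo}), the closure $\overline{M}$ of any ideal $M$ is again a two-sided ideal of $\OHq$. This is the standard argument: if $a,b\in \overline{M}$ are limits of nets $a_\lambda, b_\lambda \in M$, then $a_\lambda + b_\lambda \to a+b$ and $r a_\lambda \to ra$, $a_\lambda r \to ar$ for every $r\in \OHq$, so $a+b, ra, ar \in \overline{M}$.

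Next I would argue that $\overline{M}$ is proper. By Theorem~\ref{unit}, $Inv(\OHq)$ is open, so the set $Z$ of non-units is closed in $\OHq$. Since $M$ is a proper ideal, no element of $M$ can be a unit, so $M\subseteq Z$; taking closures, $\overline{M}\subseteq Z$. In particular $1\notin \overline{M}$, so $\overline{M}\neq \OHq$.

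Finally, I would invoke maximality: since $M\subseteq \overline{M}\subsetneq \OHq$ and $M$ is maximal, it must be that $\overline{M}=M$, which is the desired conclusion. The only step that requires any care is the verification that $\overline{M}$ is an ideal, and this is entirely routine once one knows $\OHq$ is a topological algebra; there is no real obstacle in the argument, the work is concentrated in the earlier theorem (Theorem~\ref{unit}) that guarantees the openness of the unit group.
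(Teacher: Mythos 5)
Your argument is correct and is exactly the derivation the paper intends: the corollary is stated without proof immediately after Theorem~\ref{unit}, and the openness of $Inv(\OHq)$ there is precisely what makes the set of non-units closed, so the closure of a proper ideal stays proper and maximality forces $M=\overline{M}$. Your verification that $\overline{M}$ is an ideal via continuity of the algebra operations is the routine step the authors leave implicit.
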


\begin{lemma}
 \label{reduced}

 Let $J$ be an ideal of $\ \overline{\Hq}$. If $n(x)\in J$, then
$x\in J$. The converse is true if $\ \overline{\Hq}/J$ is  reduced.
\end{lemma}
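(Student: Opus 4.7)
The plan is to treat the two implications separately, as only the converse uses reducedness of $\overline{\Hq}/J$.

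I would begin with the converse, which is essentially immediate. If $x\in J$ then $n(x)^2 = x\bar{x}\in J$ because $J$ is a two-sided ideal. Thus the image of $n(x)$ in $\overline{\Hq}/J$ squares to zero, and reducedness forces $n(x)\in J$.

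For the forward direction, assume $n(x)\in J$. The plan is to reduce to the convexity result (Proposition~\ref{conv}) in the commutative algebra $\overline{\R}$. Set $J':=J\cap\overline{\R}$; since $\overline{\R}$ embeds in $\overline{\Hq}$ as the central scalars, $J'$ is an ideal of $\overline{\R}$, and by hypothesis $n(x)\in J'$. Writing $x=x_0+x_1i+x_2j+x_3k$, it now suffices to show each $x_\ell\in J'$, since then $x\in J$.

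The core step is the componentwise estimate $|x_\ell|\leq n(x)$ in the q-order on $\overline{\R}$, which I would verify at the level of representatives: choosing representatives $\hat{x}_\ell$ for $x_\ell$, the function $\varepsilon\mapsto\sqrt{\hat{x}_0(\varepsilon)^2+\hat{x}_1(\varepsilon)^2+\hat{x}_2(\varepsilon)^2+\hat{x}_3(\varepsilon)^2}$ represents $n(x)$, and the pointwise scalar inequality $|\hat{x}_\ell(\varepsilon)|\leq\sqrt{\sum_j\hat{x}_j(\varepsilon)^2}$ places $n(x)-|x_\ell|$ in the q-positive cone via Lemma~\ref{order}. Since $n(x)\geq 0$ gives $|n(x)|=n(x)$, Proposition~\ref{conv}(2) applied to $J'$ yields $x_\ell\in J'$, whence $x\in J$. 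The only slightly delicate point in the whole argument is this representative-level inequality; everything else is formal, and no extra structure on $J$ is needed for the forward direction.
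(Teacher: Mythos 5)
Your proof is correct and follows essentially the same route as the paper's: the forward direction reduces to the componentwise inequality $|x_\ell|\leq n(x)$ and the convexity of the ideal $J\cap\overline{\R}$ in $\overline{\R}$, and the converse uses $n(x)^2=x\bar{x}\in J$ together with reducedness of the quotient. The only difference is that you make the representative-level verification of the q-order inequality explicit, which the paper leaves implicit.
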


\begin{proof} Let $x=x_{0}+x_{1}i+x_{2}j+x_{3}k\in \overline{\Hq}$ with $x_{i}\in
\overline{\R}$ such that $n(x)\in J$. Since, $J\ni
n(x)=\sqrt{x_{0}^{2}+x_{1}^{2}+x_{2}^{2}+x_{3}^{2}}\geq
\sqrt{x_{i}^{2}}=|x_{i}|$, for all $i\in \{0,1,2,3\}$, then $J\cap
\overline{\R}\ni n(x)\geq |x_{i}|$, for all $i\in \{0,1,2,3\}$. So,
$x_{i}\in J\cap \overline{\R}$, because $J\cap \overline{\R}$ is
convex.

Conversely, if $x\in J$, then $n(x)^{2}=n(x)n(x)=x\overline{x}\in
J$. Since $J$ is reduced, then $n(x)\in J$. \end{proof}

%***************************************

\section{{\bf The Algebraic Structure of $\ \OHq$}}

In this section too, unless stated otherwise, $\K =\R$.
We start   proving that the boolean algebra of $\ \OHq$ equals that of $\ \OK$.

\begin{theorem}
\label{idemp}
 $\mathcal{B}(\overline{\Hq})=\mathcal{B}(\overline{\mathbb{R}})$.
  \end{theorem}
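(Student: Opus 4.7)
The inclusion $\mathcal{B}(\overline{\R})\subseteq\mathcal{B}(\overline{\Hq})$ is immediate via the embedding $r\mapsto r+0i+0j+0k$, so the entire task is the reverse inclusion. The plan is to prove this by pure algebraic manipulation of $e^2=e$, avoiding any use of representatives or the topology (the latter would seem natural, since in classical $\Hq$ the only idempotents are $0$ and $1$, but the purely algebraic route turns out to be cleaner).

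First I would expand $e^2$ for a general $e=e_0+e_1i+e_2j+e_3k\in\overline{\Hq}$. The cross terms $e_\ell e_m(i_\ell i_m+i_m i_\ell)$ for $\ell\neq m$ cancel because $ij+ji=jk+kj=ki+ik=0$, leaving
\[
e^2=(e_0^2-e_1^2-e_2^2-e_3^2)+2e_0(e_1i+e_2j+e_3k).
\]
Equating component-wise with $e$ yields one real equation
\[
e_0^2-(e_1^2+e_2^2+e_3^2)=e_0
\]
and three ``imaginary'' equations $(2e_0-1)e_\ell=0$ for $\ell=1,2,3$.

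The crux is to show $e_0^2=e_0$ and $e_1=e_2=e_3=0$. My plan for the first statement is the following trick. Multiply each imaginary equation by $e_\ell$ and sum over $\ell$, obtaining $(2e_0-1)\sum_\ell e_\ell^2=0$; substituting the real equation, this becomes $(2e_0-1)g=0$ with $g:=e_0^2-e_0=\sum_\ell e_\ell^2$. Multiplying by $(2e_0-1)$ once more and using $(2e_0-1)^2=4g+1$ gives
\[
(4g+1)\,g=0.
\]
Now $g$ is a sum of squares in $\overline{\R}$, hence $g\geq 0$ in the sense of Definition~\ref{porder}, so $4g+1\geq 1=\alpha_0$. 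By the criterion recalled at the end of Section~2 ($x$ is a unit iff $x\geq\alpha_r$ for some $r\in\R$), $4g+1$ is a unit. Therefore $g=0$, which simultaneously gives $e_0^2=e_0$ (so $e_0\in\mathcal{B}(\overline{\R})$) and $\sum_\ell e_\ell^2=0$.

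To finish, each $e_\ell^2\geq 0$, and the bound $0\leq e_\ell^2\leq\sum_m e_m^2=0$ together with Proposition~\ref{conv}(3) applied to $J=\{0\}$ forces $e_\ell^2=0$. From this one concludes $e_\ell=0$ by a brief representative argument: if $\hat e_\ell^2\in\mathcal{N}(\R)$ then for every $a\in\R$ one has $|\hat e_\ell(\varepsilon)|/\varepsilon^{a/2}\to 0$, i.e.\ $\hat e_\ell\in\mathcal{N}(\R)$. Hence $e=e_0\in\mathcal{B}(\overline{\R})$. I expect the main obstacle to be spotting the manipulation producing $(4g+1)g=0$; once one has it in hand, the key fact that $g$ is a sum of squares (and hence $4g+1$ is automatically a unit, even though $2e_0-1$ itself need not be) makes the conclusion swift.
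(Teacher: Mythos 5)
Your proof is correct, but it follows a genuinely different route from the paper's. The paper argues through the norm map: for $e\in\mathcal{B}(\overline{\Hq})$ it observes that $n(e)$ and $n(1-e)$ are idempotents of $\overline{\R}$, invokes the classification $\mathcal{B}(\overline{\R})=\{\chi_A\,:\,A\in\mathcal{S}\}$ to write $n(e)=\chi_A$, $n(1-e)=\chi_B$, uses $n(x)=0\Rightarrow x=0$ (Lemma~\ref{reduced}) to get $e=e\chi_A$ and $1-e=(1-e)\chi_B$ with $\chi_A\chi_B=0$, and then a short computation gives $e=\chi_A$. Your argument instead works component-wise with $e=e_0+e_1i+e_2j+e_3k$, never touching the norm or the description of $\mathcal{B}(\overline{\R})$ by characteristic functions: the identity $(4g+1)g=0$ with $g=\sum_\ell e_\ell^2=e_0^2-e_0$, combined with the facts that sums of squares are q-positive and that $x\geq\alpha_0$ forces $x\in Inv(\overline{\R})$, kills $g$; convexity (Proposition~\ref{conv} with $J=(0)$) and the absence of nilpotents then kill each $e_\ell$. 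All the ingredients you use (the order structure, the unit criterion $x\geq\alpha_r\Rightarrow x\in Inv(\OK)$, convexity of ideals) are available in Section~2, and the quaternion expansion is valid since the coefficients lie in the commutative ring $\overline{\R}$ and hence are central. What the paper's proof buys is brevity given the already-established structure theory of $\overline{\R}$, plus an explicit identification $e=\chi_A$ with $A$ determined by $n(e)$; what yours buys is independence from that classification and greater generality --- it shows that in $\Hq(R)$ over any reduced partially ordered commutative ring in which $1+(\mbox{sum of squares})$ is a unit, every idempotent is a scalar idempotent.
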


\begin{proof} We clearly have that $\mathcal{B}({\overline{\mathbb{R}}}) \subseteq\mathcal{B}(\overline{\Hq})
$. On the other hand, let
$e\in \mathcal{B}(\overline{\Hq})$. Then $n(1-e), n(e)\in \OK$ are   idempotents and hence there exist  $A,B\in {\mathcal{S}}$ such that $n(e)=\chi_A$ and $n(1-e)=\chi_B$. From this we have that
$0=n(e)\chi_{A^c}=n(e\chi_{A^c})=0$. So
$e\chi_{A^c}=0$ and thus $e=e\chi_{A}$.

On the other hand, since $\ e(1-e)=0$, we have that
 $\chi_{A}\chi_{B}=0$. Since
$e\chi_{A}=e$ and $(1-e)\chi_{B}=1-e$ we get that
$e\chi_{A}+(1-e)\chi_{B}=1$. Thus,
$\chi_{A}=e\chi_{A}+(1-e)\chi_{B}\chi_{A}=e\chi_{A}$ and hence
$e=e\chi_{A}=\chi_{A}$.\end{proof}

We shall now use the Fundamental Theorem of $\ \OK$ to give a complete description of the maximal ideals of $\OHq$.

\begin{theorem}
\label{maximal}
Let $M$ be a maximal ideal of $\ \OHq$. Then
$M=\Hq({\overline{g(\mathcal{F})}})$, for some ${\mathcal{F}} \in
\mathcal{P}_{*}(\mathcal{S})$.
\end{theorem}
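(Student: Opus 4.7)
The plan is to identify $M$ with $\Hq(J)$ where $J := M \cap \overline{\R}$, and then to show $J$ is a maximal ideal of $\overline{\R}$, so that the already quoted classification theorem (the Theorem labeled 2.3 in the excerpt, giving maximal ideals of $\OK$ in the form $\overline{g(\mathcal{F})}$) applies and yields the desired conclusion.

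The first step is to establish $M = \Hq(J)$. The inclusion $\Hq(J) \subseteq M$ is immediate: since $J \subseteq M$ and $M$ is a two-sided ideal of $\OHq$, the $\overline{\R}$-submodule $J + Ji + Jj + Jk = \Hq(J)$ lies inside $M$. For the reverse inclusion I would use the classical quaternion identity
\begin{equation*}
4 x_0 \;=\; x - i x i - j x j - k x k,
\end{equation*}
valid for any $x = x_0 + x_1 i + x_2 j + x_3 k \in \OHq$, which extracts the real part (a direct check using the multiplication table of $\Hq$, and which is $\overline{\R}$-linear, hence carries over verbatim to $\OHq$). If $x \in M$, then the two-sidedness of $M$ forces $i x i, j x j, k x k \in M$, whence $4 x_0 \in M \cap \overline{\R} = J$ and so $x_0 \in J$. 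Applying the same identity to the elements $-ix, -jx, -kx \in M$, whose real parts are exactly $x_1, x_2, x_3$, gives $x_1, x_2, x_3 \in J$ as well. Thus $x \in \Hq(J)$, proving $M \subseteq \Hq(J)$.

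The second step is to verify that $J$ is a maximal ideal of $\overline{\R}$. It is proper, for otherwise $1 \in J \subseteq M$ forces $M = \OHq$. Suppose for contradiction that there is $J' \lhd \overline{\R}$ with $J \subsetneq J' \subsetneq \overline{\R}$. Then $M = \Hq(J) \subseteq \Hq(J') \subseteq \OHq$; the first inclusion is strict because $\Hq(J')$ contains $J' \setminus J$, and the second is strict because $1 \notin J'$ implies $1 \notin \Hq(J')$. This contradicts the maximality of $M$, so $J$ is maximal in $\overline{\R}$.

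Having shown $J$ is maximal in $\overline{\R}$, Theorem 2.3 yields $J = \overline{g(\mathcal{F})}$ for some $\mathcal{F} \in \mathcal{P}_*(\mathcal{S})$, and combining with Step 1 gives $M = \Hq(\overline{g(\mathcal{F})})$, as claimed. I do not anticipate a real obstacle: the only non-routine ingredient is the quaternion identity used to pull the components of $x$ back into $M \cap \overline{\R}$, which is a short computation in $\Hq$ and is automatic after tensoring with $\overline{\R}$; everything else is a standard comparison argument between ideals of $\OHq$ and ideals of its center.
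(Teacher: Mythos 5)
Your proof is correct, but it takes a genuinely different route from the paper's. The paper argues top--down from the structure theory of $\OK$: it notes that $M\cap\OR$ is a prime ideal of the center, so by the classification of prime ideals it contains some $g(\mathcal{F})$; it then invokes the topological fact that maximal ideals of $\OHq$ are closed (Corollary~\ref{maxid}) to upgrade this to $\overline{g(\mathcal{F})}\subseteq M$, and finishes by observing that $\OHq/\Hq(\overline{g(\mathcal{F})})\cong\Hq\bigl(\OR/\overline{g(\mathcal{F})}\bigr)$ is a simple ring (because $\R$ is algebraically closed in the residue field), which forces $M=\Hq(\overline{g(\mathcal{F})})$. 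You instead establish the purely algebraic contraction--extension correspondence $M=\Hq(M\cap\OR)$ via the component-extracting identity $4x_0=x-ixi-jxj-kxk$ (which is valid, and which requires only that $4$ be invertible in $\OR$ --- immediate), deduce maximality of $M\cap\OR$ by an elementary comparison of ideals using the freeness of $\OHq$ over $\OR$ on the basis $1,i,j,k$, and only then quote the classification of maximal ideals of $\OK$. Your route avoids both the closure argument and any discussion of the residue field, and it yields as a by-product that \emph{every} two-sided ideal $I\lhd\OHq$ equals $\Hq(I\cap\OR)$, which strengthens the paper's Lemma~\ref{norm1} (there obtained via the norm and only with equality for semiprime ideals). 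What the paper's route buys in exchange is the extra information that the quotient $\OHq/M$ is a quaternion division algebra over the residue field. Both arguments are sound.
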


\begin{proof} We clearly have that $M\cap \overline{\R}$ is a prime
ideal. Hence, there exists an unique $\mathcal{F}\in
\mathcal{P}_{*}(\mathcal{S})$ such that
$g(\mathcal{F})\subseteq M\cap \overline{\R}\subseteq M$.
So, $\ \overline{g({\mathcal{F})}}\subseteq
\overline{M}=M$. Thus,
$\overline{g(\mathcal{F})}=\overline{\R}\cap M$ and it
follows that $\overline{\R}\cap M$ is a  maximal ideal.

Since
$\Hq(\overline{g(\mathcal{F})})=\overline{g
(\mathcal{F})}+
\overline{g(\mathcal{F})}i+\overline{g(\mathcal{F})}j+\overline{g(\mathcal{F})}k$, we  have that
$\overline{\Hq}/\Hq (\overline{g(\mathcal{F})})\cong
\Hq(\overline{\R}/\overline{g(\mathcal{F})})$ which, since $\R$ is algebraicly closed in $\overline{\R}/\overline{g({\mathcal{F}})}$,  is a
simple ring and thus
$M=\Hq(\overline{g(\mathcal{F})})$.\end{proof}

Let $I\lhd \OHq$ be an ideal and denote by $n(I)$ the ideal of $\OK$ generated by the set $\{n(x)|\ x\in I\}$.

\begin{remark} Note that $\Hq(\overline{\R})/\Hq(g(\mathcal{F}))$ is
isomorphic to $\Hq(\overline{\R}/g(\mathcal{F}))$. We claim that
$\Hq(\overline{\R})/\Hq(g(\mathcal{F}))$ is completely prime. In
fact, let $a, b\in  \Hq(\overline{\R})/\Hq(g(\mathcal{F}))$  be such
that $ab=0$. Then $0=n(ab)=n(a)n(a)$. Hence $n(a)=0$ or $n(b)=0$ and
the claim follows.  Besides that, we have that $\Hq(g(\mathcal{F}))$
is a minimal prime ideal of $\ \overline{\Hq}$ because the prime
ideals $g(\mathcal{F})$ of $\ \overline{\R}$ are minimal prime
ideals of $\ \overline{\R}$.\end{remark}

Let ${\bf{R}}$ be a ring and denote by $U({\bf{R}})$ its  Brown McCoy radical, i.e.   $U({\bf{R}})$ is  the intersection of all ideals $M$ of ${\bf{R}}$ such that ${\bf{R}}/M$ is a
simple and unitary. Note that if ${\bf{R}}$ is commutative then $U({\bf{R}})$ coincide with  the Jacobson radical.

\begin{lemma} $U( \OHq )\cap \overline{\R}=U(\overline{\R})$. In particular  $U(\OHq)=(0)$.\end{lemma}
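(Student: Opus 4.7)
The plan is to reduce everything to Theorem~\ref{maximal} and to the result of \cite{AS} that the Jacobson radical $J(\overline{\R})$ is trivial. Since $\overline{\R}$ is commutative and unital, its Brown--McCoy radical coincides with its Jacobson radical, so $U(\overline{\R})=J(\overline{\R})=(0)$ is already at our disposal; it remains only to transfer this from the centre up to $\OHq$.

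First I would establish the identity $U(\OHq)\cap\overline{\R}=U(\overline{\R})$. In any unital ring the Brown--McCoy radical is the intersection of all two-sided maximal ideals, so by Theorem~\ref{maximal} we may write $U(\OHq)=\bigcap_{\mathcal F}\Hq(\overline{g(\mathcal F)})$, where $\mathcal F$ ranges over $\mathcal P_*(\mathcal S)$. The proof of Theorem~\ref{maximal} already records $\Hq(\overline{g(\mathcal F)})\cap\overline{\R}=\overline{g(\mathcal F)}$, and by Theorem~2.3 the $\overline{g(\mathcal F)}$ exhaust the maximal ideals of $\overline{\R}$. Since the operation $(-)\cap\overline{\R}$ commutes with intersections,
\[
U(\OHq)\cap\overline{\R}=\bigcap_{\mathcal F}\overline{g(\mathcal F)}=U(\overline{\R}),
\]
which is $(0)$.

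To deduce that $U(\OHq)=(0)$, take $x=x_0+x_1i+x_2j+x_3k\in U(\OHq)$. Because $U(\OHq)$ is a two-sided ideal and $x\bar x=\sum_j x_j^2$ is purely real, we have $x\bar x\in U(\OHq)\cap\overline{\R}=(0)$. For each $i$ this yields $0\leq x_i^2\leq \sum_j x_j^2=0$, so Proposition~\ref{conv}(3) applied to the zero ideal forces $x_i^2=0$; as $\overline{\R}$ is reduced (trivial prime radical, from \cite{AS}), we conclude $x_i=0$, and hence $x=0$.

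The main obstacle is really just the first identity, and it rests entirely on having Theorem~\ref{maximal} in hand: once every maximal ideal of $\OHq$ is recognised as the quaternionic extension of a maximal ideal of $\overline{\R}$, intersecting with the centre reproduces the commutative radical. The passage from the centred identity to $U(\OHq)=(0)$ is then a short application of the norm map together with the convexity of ideals, both of which are already set up in the paper.
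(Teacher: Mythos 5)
Your proof is correct and follows the paper's route: the identity $U(\OHq)\cap\overline{\R}=U(\overline{\R})$ is obtained, exactly as in the paper, from Theorem~\ref{maximal} together with the fact that each $\Hq(\overline{g(\mathcal F)})$ is a maximal ideal of $\OHq$ lying over the maximal ideal $\overline{g(\mathcal F)}$ of $\overline{\R}$. The only difference is in the last step: the paper invokes Lemma~\ref{reduced} to conclude $n(U(\OHq))\subseteq U(\overline{\R})=(0)$, whereas you note $x\bar x\in U(\OHq)\cap\overline{\R}=(0)$ and finish with convexity and reducedness of $\overline{\R}$ --- a slightly more self-contained variant, since it sidesteps checking that $\OHq/U(\OHq)$ is reduced.
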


\begin{proof} Let $M$ be an maximal ideal of $ \OHq $. By
Proposition~\ref{maximal}  we have that $M\cap \overline{\R}$ is a
maximal ideal of $\ \overline{\R}$. Thus $U(\OHq )\cap
\overline{\R}\supseteq U(\overline{\R})$.

On the other hand, let $M$ be a maximal ideal of $\overline{\R}$.
We have that $\Hq(M)=\Hq(\overline{g(\mathcal{F})})$, for
some $\mathcal{F}\in P(\mathcal{S}_{*})$, is a maximal ideal of
$\overline{\Hq}$. Hence, $U(\overline{\Hq})\cap \overline{\R}\subseteq
U(\overline{\R})$. So, $U(\overline{\Hq})\cap
\overline{\R}=U(\overline{\R})$. By lemma~\ref{reduced} we have that  $ n(U(\OHq ))\subset U(\OK )$ and since, by \cite{AS},
$U(\overline{\R})=(0)$ we have that  $U(\overline{\Hq})=(0)$.\end{proof}

Recall that it was proved in \cite{ASS} that $\overline{\R}$ is not Von Neumann regular.

\begin{proposition}
\label{vonreg}
 The ring $ \OHq $ is not Von Neumann regular.
\end{proposition}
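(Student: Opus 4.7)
The plan is to prove this by contradiction, transferring Von Neumann regularity from $\OHq$ down to its center $\OR$, which by the result cited just before the proposition is known not to be Von Neumann regular.

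Suppose, for contradiction, that $\OHq$ is Von Neumann regular. Pick an arbitrary element $x\in \overline{\R}\subset \overline{\Hq}$ (identified as the quaternion $x=x+0\cdot i+0\cdot j+0\cdot k$). By assumption there exists $y\in \overline{\Hq}$ with $xyx=x$. Decompose $y=y_{0}+y_{1}i+y_{2}j+y_{3}k$ with $y_{\ell}\in \overline{\R}$. The crucial (elementary) observation is that $\overline{\R}$ is contained in the center of $\overline{\Hq}$: indeed, since the quaternion multiplication table involves only coefficients from the base ring, any element of $\overline{\R}$ commutes with $i,j,k$ and hence with every element of $\overline{\Hq}$. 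Therefore $xyx=x^{2}y$, and comparing components of
\[
x^{2}y_{0}+x^{2}y_{1}i+x^{2}y_{2}j+x^{2}y_{3}k \;=\; x
\]
via the direct-sum decomposition $\overline{\Hq}=\overline{\R}\oplus\overline{\R}i\oplus\overline{\R}j\oplus\overline{\R}k$ yields in particular $x^{2}y_{0}=x$ in $\overline{\R}$.

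From this I obtain $xy_{0}x=x^{2}y_{0}=x$ with $y_{0}\in \overline{\R}$. Since $x\in \overline{\R}$ was arbitrary, this shows that $\overline{\R}$ itself is Von Neumann regular, contradicting the fact (cited from \cite{ASS} in the paragraph preceding the proposition) that $\overline{\R}$ is not Von Neumann regular.

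There is essentially no obstacle: the only thing one must verify carefully is the centrality of $\overline{\R}$ inside $\overline{\Hq}$ and the uniqueness of the quaternionic coordinates, both of which are immediate from the construction $\overline{\Hq}=\Hq(\overline{\R})$ made in Section 3. If one wanted to avoid even mentioning Von Neumann regularity of $\overline{\R}$, one could instead exhibit a specific $x\in \overline{\R}$ (for instance, one coming from the non-closed minimal prime ideal $g(\mathcal{F}_{0})$ alluded to earlier) for which no such $y_{0}$ can exist, but the contradiction route above is cleaner.
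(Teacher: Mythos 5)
Your proof is correct, and it follows the same overall strategy as the paper: assume $\OHq$ is Von Neumann regular, test regularity on an element $a\in\OR$, and derive a quasi-inverse for $a$ inside $\OR$, contradicting the cited fact that $\OR$ is not Von Neumann regular. The difference lies in how the quasi-inverse is extracted from $y\in\OHq$ with $a=a^2y$. The paper applies the reduced norm: $n(a)=a^2\,n(y)$, so $|a|\in a^2\OR$, and hence (implicitly via the multiplicativity of $n$ and the convexity of ideals, Proposition~\ref{conv}) $a\in a^2\OR$. You instead use the free-module decomposition $\OHq=\OR\oplus\OR i\oplus\OR j\oplus\OR k$ together with the centrality of $\OR$ to read off $a=a^2y_0$ directly from the $1$-component. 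Your route is the more elementary of the two: it needs nothing beyond the construction $\OHq=\Hq(\OR)$, whereas the paper's version quietly relies on $n(a)=|a|$ versus $a$ and on convexity to pass from $|a|\in a^2\OR$ to $a\in a^2\OR$ --- a small step the paper glosses over. Both arguments are valid, and yours would generalize verbatim to any quaternion (or, more generally, free) algebra over a commutative base ring that is not Von Neumann regular.
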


\begin{proof} Suppose that $\overline{\Hq}$ is a regular ring. Then for
each $0\neq a\in \overline{\R}$ there would exist
$y=y_{0}+y_{1}i+y_{2}j+y_{3}k\in \overline{\Hq}$ such that $a=a^2y$.
Thus, $a=n(a)=a^2(n(y))$, which  implies that $\overline{\R}$ is Von Neumann regular, a contradiction.\end{proof}

In the next   results we characterize the essential ideals of
$\ \overline{\R}$ and $\ \OHq $. We shall use the notation of \cite{beidar}.

\begin{lemma}

 Let $I$ be an ideal of $\ \overline{\R}$. Then $r_{\overline{\R}}(I)\neq
(0)$ if and only if there exists $\ e\in \mathcal{B}(\overline{\R})$
such that $I\subseteq \overline{\R}e$. Equivalently, $I$ is
essential if and only if $\ I$ is not contained in a principal idempotent
ideal. Moreover if $r_{\overline{\R}}(I)\neq
(0)$ then $\mathcal{B}(\overline{\R}) \cap r_{\overline{\R}}(I)\neq
(0)$
\end{lemma}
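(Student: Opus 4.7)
The plan is to derive all three assertions from one central construction: starting from any nonzero $y \in r_{\overline{\R}}(I)$, produce a nonzero idempotent $f \in \mathcal{B}(\overline{\R}) \cap r_{\overline{\R}}(I)$. Given such an $f$, the idempotent $e := 1 - f$ satisfies $I \subseteq \overline{\R}e$ because $xf = 0$ forces $x = xe$ for every $x \in I$; this is the forward direction of the main equivalence. The converse is immediate: if $I \subseteq \overline{\R}e$ with $e$ a proper idempotent, then $0 \neq 1 - e \in r_{\overline{\R}}(I)$. The equivalence with essentiality follows automatically, since in the reduced ring $\overline{\R}$ an ideal $I$ is essential if and only if $r_{\overline{\R}}(I) = 0$ (because $(r_{\overline{\R}}(I) \cap I)^2 \subseteq r_{\overline{\R}}(I) \cdot I = 0$).

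For the construction, first I would reduce to nonnegative $y$. Since $\overline{\R}$ is reduced, $yx = 0$ for $x \in I$ gives $(|y|x)^2 = y^2 x^2 = (yx)^2 = 0$ and hence $|y|x = 0$, so $|y| \in r_{\overline{\R}}(I)$. Replacing $y$ by $|y|$, I may assume $y \geq 0$ and pick a pointwise nonnegative representative $\hat y$ via Lemma~\ref{order}. If $y$ happens to be a unit then $I = 0$ and $1 \in r_{\overline{\R}}(I)$ is already an idempotent, so assume $y \notin \mathrm{Inv}(\overline{\R})$.

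Because $y \neq 0$, the representative $\hat y$ is not null, so there exist $a \in \R$ and a sequence $\varepsilon_n \to 0$ with $\hat y(\varepsilon_n) \geq \varepsilon_n^a$. Fix any $b > a$ and set $A := \{\varepsilon \in \I : \hat y(\varepsilon) \geq \varepsilon^b\}$. Then $\varepsilon_n^{a-b} \to \infty$ forces $\varepsilon_n \in A$ eventually, so $0 \in \overline{A}$; meanwhile the characterization of units in $\overline{\R}$ together with $y \notin \mathrm{Inv}(\overline{\R})$ gives $0 \in \overline{A^c}$. Hence $A \in \mathcal{S}$ and $f := \chi_A$ is a nontrivial element of $\mathcal{B}(\overline{\R})$. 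To verify $f \in r_{\overline{\R}}(I)$, note that for $x \in I$ the product $\hat x \hat y$ is null; on $A$ one has $|\hat x(\varepsilon)| \leq |\hat x(\varepsilon)\hat y(\varepsilon)|/\varepsilon^b$, so $|\hat x \chi_A|(\varepsilon) \leq |\hat x \hat y|(\varepsilon)/\varepsilon^b$, which remains null since dividing a null function by a polynomial yields a null function. Thus $xf = 0$ for every $x \in I$.

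The main obstacle lies precisely in this construction: in a general reduced commutative ring, a nonzero annihilator need not contain a nonzero idempotent, so one must exploit the specific structure of $\overline{\R}$ whereby every idempotent has the form $\chi_A$ with $A \in \mathcal{S}$. The calibration of $b$ is delicate because two conditions pull in opposite directions: $0 \in \overline{A}$ is secured by $y \neq 0$, while $0 \in \overline{A^c}$ is secured by the non-invertibility of $y$; both are reconciled by any choice $b > a$.
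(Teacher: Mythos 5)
Your proof is correct, and it reaches the key idempotent by a genuinely different route than the paper. The paper takes an arbitrary $0\neq x\in r_{\overline{\R}}(I)$, invokes the Fundamental Theorem to produce $A\in\mathcal{S}$ with $x\chi_A=0$, and then argues through a limit manipulation that $\chi_{A^c}$ annihilates $I$, so that $I\subseteq \overline{\R}\chi_A$. You instead normalize the annihilating element to a q-positive non-unit $y$ with a pointwise nonnegative representative $\hat y$ and take the super-level set $A=\{\varepsilon\in\I:\hat y(\varepsilon)\geq \varepsilon^b\}$: the condition $0\in\overline{A}$ comes from $y\neq 0$, the condition $0\in\overline{A^c}$ from $y\notin Inv(\overline{\R})$ via the unit criterion, and $\chi_A\in r_{\overline{\R}}(I)$ follows from the pointwise estimate $|\hat x\chi_A|\leq |\hat x\hat y|/\varepsilon^b$. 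The essential difference is that your $A$ carries a quantitative lower bound for $\hat y$, which is exactly what turns the verification that the idempotent kills $I$ into a one-line estimate; the paper's $A$ is constrained only by $x\chi_A=0$, which gives no lower bound for $x$ off $A$, and the subsequent limit computation there is correspondingly delicate (an arbitrary $A$ with $x\chi_A=0$ can be disjoint from the ``support'' of $I$, so the extra control your construction provides is genuinely doing work). Your handling of the easy direction, of the essentiality reformulation via reducedness of $\overline{\R}$, and of the ``moreover'' clause (the constructed $\chi_A$ lies in $\mathcal{B}(\overline{\R})\cap r_{\overline{\R}}(I)$) is all sound; alternatively, the reduction from $y$ to $|y|$ could be done directly by the convexity of ideals (Proposition~\ref{conv}) applied to $r_{\overline{\R}}(I)$. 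Two cosmetic points only: passing from $\hat y\notin\mathcal{N}(\R)$ to $\hat y(\varepsilon_n)\geq\varepsilon_n^{a}$ requires the harmless shift from $\delta\varepsilon_n^{a}$ to $\varepsilon_n^{a+1}$, and since the unit criterion is stated for exponents $a\geq 0$ you should take $b>\max(a,0)$.
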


\begin{proof} If $\ I\subseteq \overline{\R}e$, for some $e\in \mathcal{B}(\overline{\R})$ then
$1-e\in r_{\overline{\R}}(I)\neq 0$.

Conversely, if $\ 0\neq x\in r_{\overline{\R}}(I)$ then $x$ must be a zero
divisor and it follows that there exists $A\in {\mathcal{S}}$ such that $x\chi_A=0$
or equivalently $x\chi_{A^c}=x$. We claim that $\chi_{A}\in
r_{\overline{\R}}(I)$. In fact, for any $y\in I$ we have that
$xy=0$. Thus, $x\chi_{A^c}y=0$. So, if we choose representatives, we have that
$\hat{x}(1-\chi_{A})\hat{y}\in \mathcal{N}(\R)$.

If $\chi_{A^c} \hat{y}\notin \mathcal{N}(\R)$ then there exists
 $a\in \mathbb{R}$ such that
$ \lim\limits_{\varepsilon \rightarrow
0}\frac{|\chi_{A^c} (\varepsilon)\hat{y}(\varepsilon)|}{\varepsilon^{a}}\neq
0$. However, $ \lim\limits_{\varepsilon\rightarrow
0}\frac{|\hat{x}(\varepsilon)||\chi_{A^c}(\varepsilon)\hat{y}(\varepsilon)|}{\varepsilon^{a+b}}= \lim\limits_{\varepsilon\rightarrow
0}\frac{|\hat{x}(\varepsilon)\chi_{A^c} (\varepsilon)|}{\varepsilon^{a}}
\frac{|\chi_{A^c}(\varepsilon)\hat{y}(\varepsilon)|}{\varepsilon^{b}}
= \lim\limits_{\varepsilon\rightarrow
0}\frac{|\hat{x}(\varepsilon) \chi_{A^c}(\varepsilon))}{\varepsilon^{b}}=0,\ \forall b \in \mathbb{R}$. Hence, $\hat{x} \chi_{A^c} \in
\mathcal{N}(\R)$, which implies $x \chi_{A^c} =0$. Thus,
$x=x\chi_{A}+x\chi_{A^{c}}=0$, a contradiction. So, $ \chi_{A^c} \hat{y}\in
\mathbb{N}$ and follows that $\chi_{A^c} y=0$, for all $y\in I$.
Thus, $ \chi_{A^c}\in r_{\overline{\R}}(I)$. Using this we have for $\ y\in
I$ that $y=y\chi_{A}+y\chi_{A^c} =y\chi_{A}\in
\overline{\R}\chi_{A}$.\end{proof}

 \begin{lemma}
 \label{fg}
 If $I$ is a proper finitely generated ideal of $\ \OK$ then it is contained in a principal idempotent ideal and hence is not essential. In particular essential ideals are not finitely generated.

 \end{lemma}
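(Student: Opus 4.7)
Plan: Let $I = (x_1, \ldots, x_n)$ be a proper finitely generated ideal of $\OK$. My goal is to produce a nonzero idempotent $e \in \mathcal{B}(\OR)$ with $e x_i = 0$ for every $i$; once this is accomplished, $f = 1-e$ is a non-unit idempotent, $I \subseteq \OK f$ is a principal idempotent ideal, and by the preceding lemma $I$ is not essential.

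The key device is the sum of squares $y := x_1^2 + \cdots + x_n^2 \in \OR \cap I$. If $y$ were a unit, then $1 = zy = \sum (z x_i) x_i$ would belong to $I$, contradicting properness; hence $y$ is a non-unit. The Fundamental Theorem of $\OK$ now provides a nonzero idempotent $e = \chi_A \in \mathcal{B}(\OR)$ such that $ey = 0$, that is,
\[
\sum_{i=1}^{n} e x_i^2 = 0.
\]
The heart of the argument will be extracting from this aggregated equality the individual vanishings $e x_i = 0$.

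To do so I note that each summand $e x_i^2$ is q-positive, being a product of two q-positive elements of $\OR$. A short order-theoretic argument in $\OR$ shows that a finite sum of q-positive elements can vanish only if every summand vanishes: the two-term case reduces to the observation that any element simultaneously q-positive and q-negative must be $0$, which is immediate from Lemma~\ref{order} since such an element has a representative $\hat u$ with $|\hat u(\varepsilon)| \leq \varepsilon^b$ for every $b>0$ on some deleted neighbourhood of $0$, hence is negligible. Consequently $e x_i^2 = 0$, so $(ex_i)^2 = e^2 x_i^2 = 0$; since $\OR$ is reduced (its prime radical is trivial, by the results cited in Section~2), this forces $e x_i = 0$ for every $i$, completing the construction. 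The principal obstacle is precisely this passage from $ey = 0$ to the individual annihilations $e x_i = 0$, which genuinely needs both the order structure on $\OR$ and the absence of nonzero nilpotents; everything else is a direct invocation of the Fundamental Theorem and of the preceding lemma.
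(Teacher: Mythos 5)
Your proof is correct and follows essentially the same route as the paper's: the paper aggregates the generators into the single element $x=\sum|x_i|\in I$, notes it is a non-unit, obtains an idempotent $e$ with $xe=0$ from the Fundamental Theorem, and descends to $x_ie=0$ via $0\le |x_i|e\le xe=0$ --- exactly the order-theoretic step you carry out for $\sum x_i^2$. Your variant with squares merely adds the detour through reducedness of $\OR$, and it leans on the section's standing convention $\K=\R$: for $\K=\C$ you would need $\sum x_i\overline{x_i}$ in place of $\sum x_i^2$ (which need not be q-positive, or even real, there), whereas $\sum|x_i|$ works uniformly.
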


 \begin{proof} Let $I = \langle x_1,\cdots , x_n\rangle$ be a proper ideal. It follows that $x:= \sum |x_i|\in I$ and hence is a zero divisor. So there exists $e\in \mathcal{B}(\overline{\R})$ such that $xe=0$. But $|x_i|e\leq xe=0$ and hence $x_ie=0, \forall i$. It follows that $I\subset \OK (1-e)$.
 \end{proof}

\begin{lemma}

Let $I$ be an ideal of $\ \OHq $. Then $I\in
\mathcal{D}(\overline{\Hq})$ if and only if $\ n(I)\in
\mathcal{D}({\overline{\R}})$.
\end{lemma}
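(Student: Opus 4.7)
The plan is to prove both directions by transferring annihilators through the quaternion norm $n\colon\OHq\to\OR$, using two facts: the multiplicativity $n(ax)=n(a)\,n(x)$ for $a,x\in\OHq$ (which follows from $(ax)\overline{(ax)}=a(x\bar x)\bar a=n(a)^2 n(x)^2$, since $n(x)^2\in\OR$ is central, together with uniqueness of the nonnegative square root in $\OR$), and the first half of Lemma~\ref{reduced}, which asserts that $n(y)\in J$ implies $y\in J$ for every ideal $J\lhd\OHq$. Specializing the latter to $J=(0)$ yields the reducedness principle $n(y)=0\Rightarrow y=0$, and both implications will rest on this.

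For the implication $n(I)\in\mathcal{D}(\OR)\Rightarrow I\in\mathcal{D}(\OHq)$, I would take any $a\in r_{\OHq}(I)$ and, for each $x\in I$, read off $n(a)n(x)=n(ax)=0$. Hence $n(a)$ annihilates every generator of $n(I)$, so $n(a)\in r_{\OR}(n(I))=(0)$, and the reducedness principle forces $a=0$. Consequently $r_{\OHq}(I)=0$, i.e.\ $I\in\mathcal{D}(\OHq)$.

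The reverse implication I would handle by contraposition. Assuming $n(I)\notin\mathcal{D}(\OR)$, the lemma immediately preceding Lemma~\ref{fg} supplies a nonzero idempotent $e\in\mathcal{B}(\OR)\cap r_{\OR}(n(I))$. Because, by Theorem~\ref{idemp} and the Section~2 description of $\mathcal{B}(\OC)$, every idempotent of $\OR$ is a characteristic function $\chi_A$, it is its own nonnegative square root, so $n(e)=e$. For any $x\in I$, multiplicativity of $n$ then gives $n(ex)=e\,n(x)=0$, and Lemma~\ref{reduced} upgrades this to $ex=0$. Thus the nonzero element $e$ lies in $r_{\OHq}(I)$, contradicting $I\in\mathcal{D}(\OHq)$.

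I anticipate no serious obstacle. The only delicate routine checks are the multiplicativity of $n$ on $\OHq$ (a one-line calculation using centrality of $\OR$) and the identity $n(e)=e$ for $e\in\mathcal{B}(\OR)$; once those are in place, both implications reduce to a single application of Lemma~\ref{reduced} together with the characterization of annihilators of $\OR$-ideals recalled just above.
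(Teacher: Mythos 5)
Your proof is correct, and one half of it coincides with the paper's: for the implication ``$n(I)$ dense $\Rightarrow I$ dense'' the paper argues by contraposition, sending a nonzero $x\in r_{\OHq}(I)$ to the nonzero element $n(x)\in r_{\OR}(n(I))$, which is exactly your first direction read backwards. Where you genuinely diverge is in the implication ``$I$ dense $\Rightarrow n(I)$ dense''. The paper proves this one directly, using the essential-ideal characterization of density: given a nonzero ideal $K\lhd\OR$ and $0\neq y\in K$, it picks $0\neq x\in I\cap y\OHq$ and observes that $0\neq n(x)\in n(I)\cap K$ (implicitly using convexity to see $n(x)\in K$). You instead contrapose and stay entirely on the annihilator side, pulling a nonzero idempotent $e\in\mathcal{B}(\OR)\cap r_{\OR}(n(I))$ out of the preceding lemma, noting $n(e)=e$, and pushing it into $r_{\OHq}(I)$ via multiplicativity of $n$ and the reducedness principle $n(y)=0\Rightarrow y=0$ from Lemma~\ref{reduced}. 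Your version has the merit of making both directions perfectly symmetric (everything is a transfer of annihilators through $n$) and of not needing the intersection characterization of essential ideals; the cost is the extra inputs you must certify, namely multiplicativity of $n$ on $\OHq$ (which the paper also uses, silently) and the idempotent clause of the earlier lemma. In fact the idempotent is not strictly necessary: any nonzero $c\in r_{\OR}(n(I))$ already satisfies $n(cx)=|c|\,n(x)=|c\,n(x)|=0$ for $x\in I$, so $cx=0$ and $c\in r_{\OHq}(I)$, which would shorten your second direction further.
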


\begin{proof} Let $I\in \mathcal{D}(\overline{\Hq})$, $J=n(I)$ and let $K$ be a non-zero
ideal of ${\overline{\R}}$. Choosing $0\neq y\in K$ we have  that there exists $0\neq x\in
I\cap y\OHq$. Thus, $0\neq n(x)\in  J\cap
  K $, which implies that $J\in
\mathcal{D}(\overline{\R})$.

Conversely, if $I\notin \mathcal{D}(\overline{\Hq})$, then
$r_{\overline{\Hq}}(I)\neq (0)$. Thus, there exists $0\neq x\in
r_{\overline{\Hq}}(I)$ such that $xy=0$, for all $y\in I$ and
thus $0=n(x)n(y),\ \forall y\in I$. So, $0\neq n(x)\in
r_{\overline{\R}}(n(I))$ and hence  $n(I)\notin
\mathcal{D}(\overline{\R})$.\end{proof}

\begin{lemma}
\label{norm1}
Let $I\lhd \OHq$ be an ideal. Then $I\subset \OHq (n(I))$. Moreover if $\ I$ is semi-prime then $\ I= \OHq (n(I))$.

\end{lemma}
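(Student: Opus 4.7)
The plan is to treat the two inclusions separately, using Proposition~\ref{conv} (convexity) for the first, and Lemma~\ref{reduced} together with the semi-primeness hypothesis for the second.

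For the inclusion $I\subset \OHq(n(I))$, I would take an arbitrary $x=x_{0}+x_{1}i+x_{2}j+x_{3}k\in I$ and show that each coordinate $x_\alpha$ lies in $n(I)$. By definition $n(x)\in n(I)$, and in $\overline{\R}$ we have the coordinate-wise bound
\[
|x_\alpha|=\sqrt{x_\alpha^{2}}\leq \sqrt{x_{0}^{2}+x_{1}^{2}+x_{2}^{2}+x_{3}^{2}}=n(x),
\]
exactly as in the proof of Lemma~\ref{reduced}. Since $n(I)$ is an ideal of $\OK$ containing $n(x)$, the convexity statement in Proposition~\ref{conv} forces $|x_\alpha|\in n(I)$, and hence $x_\alpha\in n(I)$, for every $\alpha\in\{0,1,2,3\}$. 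This gives $x\in \OHq(n(I))$.

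For the reverse inclusion under the assumption that $I$ is semi-prime, the key is to upgrade $n(I)$ from an ideal of $\OK$ to a subset of $I$ itself. Because $\OHq/I$ is reduced, Lemma~\ref{reduced} yields $n(z)\in I$ for every $z\in I$. Therefore every generator of the $\OK$-ideal $n(I)$ lies in $I\cap \overline{\R}$, and since $I\cap\overline{\R}$ is itself an ideal of $\overline{\R}$, we conclude $n(I)\subset I\cap \overline{\R}$. Now for $y=y_{0}+y_{1}i+y_{2}j+y_{3}k\in \OHq(n(I))$, each $y_\alpha$ lies in $n(I)\subset I$, so $y_\alpha\cdot 1, y_\alpha\cdot i, y_\alpha\cdot j, y_\alpha\cdot k\in I$ by the ideal property, and hence $y\in I$.

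The only step requiring any care is verifying that the elements of $n(I)$ really do belong to $I$ when $I$ is semi-prime; this is immediate from Lemma~\ref{reduced} applied to the individual generators $n(z)$ for $z\in I$, but one has to note explicitly that $I\cap\overline{\R}$ is an ideal of $\overline{\R}$ so that arbitrary $\overline{\R}$-linear combinations of such generators remain in $I$. Everything else is bookkeeping with the convexity of ideals in $\overline{\R}$.
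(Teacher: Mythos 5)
Your proof is correct and follows essentially the same route as the paper: the first inclusion via the coordinate bound $|x_\alpha|\leq n(x)$ and the convexity of ideals (Proposition~\ref{conv}), and the reverse inclusion in the semi-prime case via Lemma~\ref{reduced}. The paper compresses the second half into a single sentence (``apply Theorem~\ref{reduced} to get equality''), whereas you correctly spell out the needed intermediate step that $n(I)\subset I\cap\overline{\R}$ because the generators $n(z)$ land in $I$ and $I\cap\overline{\R}$ is an ideal of $\overline{\R}$.
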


\begin{proof}
In fact, take an element
$x=x_{0}+x_{1}i+x_{2}j+x_{3}k\in I$. Hence, $n(I)\ni
n(x)=\sqrt{x_{0}^{2}+x_{1}^{2}+x_{2}^{2}+x_{3}^{2}}\geq |x_{i}|$,
for all $i\in \{0,1,2,3\}$. So, by convexity of ideal, $x_{i}\in n(I)$, for all $i\in
\{0,1,2,3\}$ and hence $\ x\in \Hq(n(I))$. In case $I$ is reduced then we apply Theorem~\ref{reduced} to get equality.
\end{proof}

\begin{proposition}
\label{essid}
Let $\ I\lhd \overline{\Hq}$ be an ideal. The following conditions are equivalent:
\begin{enumerate}
\item $I\notin
\mathcal{D}(\overline{\Hq})$.

\item $I\subseteq \overline{\Hq}e$, for some $\ e^2=e\in \overline{\R}$.
\end{enumerate}
\end{proposition}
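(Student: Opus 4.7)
The plan is to reduce the statement to the preceding characterization of essential ideals of $\overline{\R}$ via the norm map $n$. The two key tools will be the earlier lemma asserting $I \in \mathcal{D}(\overline{\Hq})$ iff $n(I) \in \mathcal{D}(\overline{\R})$, and Lemma~\ref{norm1}, which says $I \subseteq \overline{\Hq}(n(I))$ for every ideal $I$ of $\overline{\Hq}$.

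For the implication $(1) \Rightarrow (2)$, I would first invoke the equivalence of essentiality between $I$ and $n(I)$: from $I \notin \mathcal{D}(\overline{\Hq})$ one obtains $n(I) \notin \mathcal{D}(\overline{\R})$. The earlier characterization of essential ideals of $\overline{\R}$ then produces a (non-trivial) idempotent $e \in \mathcal{B}(\overline{\R})$ such that $n(I) \subseteq \overline{\R}e$. Finally, Lemma~\ref{norm1} gives $I \subseteq \overline{\Hq}(n(I))$, and since every generator of $n(I)$ already lies in $\overline{\R}e$, we get $\overline{\Hq}(n(I)) \subseteq \overline{\Hq}\cdot \overline{\R}e = \overline{\Hq}e$, yielding the desired inclusion.

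The converse $(2) \Rightarrow (1)$ is more direct and does not pass through the norm map. Suppose $I \subseteq \overline{\Hq}e$ for a non-trivial idempotent $e \in \overline{\R}$. Because $e$ lies in $\overline{\R}$ it is central in $\overline{\Hq}$, so $(1-e)(xe) = x(1-e)e = 0$ for every $x \in \overline{\Hq}$. Hence $(1-e)\cdot I = 0$ with $1-e \neq 0$, so $r_{\overline{\Hq}}(I) \neq 0$ and $I$ is not essential.

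I do not anticipate a genuine obstacle: both directions assemble by a routine transfer through $n$ from results already established. The only point that deserves care is ensuring that the idempotent produced in $(1)\Rightarrow(2)$ is non-trivial, so that $\overline{\Hq}e$ is actually a proper ideal; this is automatic, since $e=1$ would make $n(I)\subseteq \overline{\R}e$ vacuous and in particular would not witness $n(I) \notin \mathcal{D}(\overline{\R})$, contradicting the hypothesis.
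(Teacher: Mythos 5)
Your argument is correct and follows essentially the same route as the paper's: pass from $I\notin\mathcal{D}(\overline{\Hq})$ to $n(I)\notin\mathcal{D}(\overline{\R})$, apply the characterization of non-essential ideals of $\overline{\R}$ to get $n(I)\subseteq\overline{\R}e$, and conclude via Lemma~\ref{norm1} that $I\subseteq\overline{\Hq}(n(I))\subseteq\overline{\Hq}e$; the converse is the same annihilator observation the paper dismisses as trivial. Your explicit remark that the idempotent produced is necessarily non-trivial is a small clarification the paper leaves implicit, but it does not change the argument.
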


\begin{proof} Suppose that $I\notin \mathcal{D}(\overline{\Hq})$. Then there
exists $e\in \overline{\R}$ such that $n(I)\subseteq
\overline{\R}e$. By lemma~\ref{norm1} we have that $I \subseteq\Hq(n(I))$.   Therefore,
$I\subseteq \Hq(n(I))\subseteq \OHq(\overline{\R}e)=\Hq(\overline{\R})e$.
The proof of the converse is trivial.
\end{proof}

Here we let $\K$ stand for $\R$ of $\C$. It follows from Proposition~\ref{essid} that the singular ideals $Z_r(\OK )=Z_r(\OHq )=0$  and hence ,by Theorem 2.1.15 of \cite{beidar}, we have that $Q_{max}(\OK )$ and $Q_{mr}(\OHq )$, the maximal  right rings of quotients of $\OK$, are Von Neumann regular. Note that since $\OK$ is commutative we have that $\OK$ is contained in its extended centroid.

%**************************************

\section{{\bf{Generalized Holomorphic Functions}}}

We start  recalling  the definition of holomorphic   and analytic functions. Here $\K$ shall always stand for
$\C$, $\Omega$ denotes a non-void open subset of
$\C$, ${\mathcal H}(\Omega )=\{f\in {\mathcal C}^1(\Omega;\,\C)\,:\,\overline{\partial}f=0\}$ and
${\mathcal HG}(\Omega)=\{f\in{\mathcal G}(\Omega;\, \C)\,:\,\overline{\partial}f=0\}$.

The following theorem classifies  completely the analytic functions. We refer the reader to \cite{afj} for exact
statement and more details.

\begin{theorem}[\cite{afj}]
\label{holom}
 Let $f\in{\mathcal HG}(\Omega)$. Then $ f $ is analytic if and only if $\ f$ is sub-linear.
\end{theorem}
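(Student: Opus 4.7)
The plan is to prove both implications by transferring Cauchy's integral formula from representatives to the level of generalized numbers, while carefully tracking asymptotic estimates in $\varepsilon$ to ensure that everything lives in the sharp topology of $\OC$.

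For the forward direction, I would assume that $f$ is analytic in the sense that $f(z) = \sum_{n \geq 0} a_n (z - z_0)^n$ converges in the sharp topology on some sharp ball around every point $z_0 \in \Omega$. Convergence of such a series in the sharp topology forces the valuations $V(a_n)$ to satisfy a linear growth estimate of the form $V(a_n) \geq -nA + B$ for constants $A, B$ depending on $z_0$. Summing the geometric-type bound $\|a_n (z - z_0)^n\| \leq e^{-B} e^{nA} \|z - z_0\|^n$ on a suitable ball yields the required sub-linear estimate on $\|f(z)\|$ in terms of $\|z - z_0\|$.

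For the reverse direction, the idea is to take a moderate representative $\hat f(\cdot, \varepsilon)$ of $f$. For each fixed $\varepsilon$, this is a classical holomorphic function on $\Omega$, so Cauchy's integral formula produces Taylor coefficients
\[
\hat a_n(\varepsilon) = \frac{1}{2\pi i} \oint_{|\zeta - z_0| = r} \frac{\hat f(\zeta, \varepsilon)}{(\zeta - z_0)^{n+1}} \, d\zeta.
\]
I would check that the moderateness of $\hat f$ makes each $\hat a_n$ moderate, so that $a_n \in \OC$, and that modifying $\hat f$ by a negligible function perturbs $\hat a_n$ negligibly. Then the sub-linearity hypothesis, applied on a fixed classical circle around $z_0$, gives a uniform-in-$\varepsilon$ Cauchy estimate $|\hat a_n(\varepsilon)| \leq M(\varepsilon) r^{-n}$ where $M(\varepsilon)$ has controlled growth. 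This translates into the valuation inequality $V(a_n) \geq -n \log(1/r) - C$ needed for the series $\sum a_n (z-z_0)^n$ to converge in the sharp topology on a sharp ball of positive generalized radius. Identifying the sum with $f$ follows from the fact that for each $\varepsilon$ the classical Taylor series of $\hat f(\cdot, \varepsilon)$ represents $\hat f(\cdot, \varepsilon)$, so the difference is negligible.

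The main obstacle is the interaction between the two notions of convergence: the classical Taylor series converges pointwise in $\varepsilon$, but we need convergence in the sharp topology, which is a much stronger uniform-in-$\varepsilon$ statement. The sub-linearity hypothesis is precisely what is required to promote the $\varepsilon$-wise Cauchy estimates into sharp-topology estimates, and the delicate part is verifying that the radius of convergence, which is now a generalized positive real, is strictly positive in the sharp sense (i.e.\ bounded below by some $\alpha_r$), so that a genuine sharp neighborhood of $z_0$ is covered.
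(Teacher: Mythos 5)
Note first that the paper you are working from does not prove this statement: Theorem~\ref{holom} is imported verbatim from \cite{afj} (``We refer the reader to \cite{afj} for exact statement and more details''), so there is no internal proof to compare against, and in particular the term ``sub-linear'' is never defined here. Your proposal inherits exactly that problem: at no point do you fix a definition of sub-linearity, and the estimates you actually derive never use the hypothesis. Concretely, in the reverse direction you apply Cauchy's formula on a fixed classical circle $|\zeta-z_0|=r$ to a moderate representative and obtain $|\hat a_n(\varepsilon)|\leq M(\varepsilon)r^{-n}$ with $M(\varepsilon)\leq C\varepsilon^{-N}$. Since $r$ is a constant in $\varepsilon$, the factor $r^{-n}$ does not move the valuation, so all you get is $\|a_n\|\leq e^{N}$ uniformly in $n$ --- a bound that holds for \emph{every} $f\in{\mathcal HG}(\Omega)$ by moderateness alone. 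In the ultrametric the series $\sum a_n(z-z_0)^n$ then converges exactly on the ball $\|z-z_0\|<1$, i.e.\ only for infinitesimal increments; that unconditional statement is essentially Theorem 5.7 of \cite{afj}, which the present paper invokes for arbitrary $f\in{\mathcal HG}(\Omega)$ in the proof of Theorem~\ref{holomor} with no sub-linearity assumed. As written, your argument makes the sub-linearity hypothesis vacuous and turns the theorem into an unconditional one, which it is not.

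The missing idea is therefore the actual content of the theorem: ``analytic'' must mean convergence of the Taylor series on a neighborhood of appreciable (non-infinitesimal) radius, and ``sub-linear'' must be the quantitative condition on the $\varepsilon$-growth of the representative and its derivatives which upgrades the $n$-independent bound $\|a_n\|\leq e^{N}$ to one of the form $\|a_n\|\leq e^{\delta_n} r^{-n}$ with $\delta_n$ growing sub-linearly in $n$, so that $\|a_n\|\,\|z-z_0\|^n\to 0$ even when $\|z-z_0\|$ is of order $1$. Your forward direction has the mirror-image defect: from convergence you extract $V(a_n)\geq -nA+B$ and then assert ``the required sub-linear estimate'' without saying what quantity is being estimated or against what. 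To repair the proof you must first state the definitions from \cite{afj} and then show that it is the $n$-dependence of the growth exponents in $\varepsilon$ --- not merely their finiteness --- that is equivalent to convergence on an appreciable ball; the Cauchy-formula framework you set up is the right vehicle for that, but the step where sub-linearity enters is precisely the one you have skipped.
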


In \cite{afj1} a Generalized Goursat Theorem is proved and so what lacks is an Identity Theorem.

\begin{definition}
 Let $\ f$ be a holomorphic function. We say that $\ f$  satisfies the Identity Theorem if $\ \forall z_0\in \Ot$ we
 have the pre-image $\ f^{-1}(f(z_0))$ has no accumulation  point.

We denote by $C_f(z_0):=\langle f^{(n)}(z_0)\ |\ n\in \N^*\rangle$ the ideal of $\ \OK$ generated by the coefficients
of the Taylor series of $\ f(z)-f(z_0)$. Here $f^{(n)}$ denotes the $n$-th derivative of $\ f$.
\end{definition}

\begin{example}
Let $A\in {\mathcal{S}}$ and define $f(z):=\chi_Az$. Then $\ f$ is analytic and
$f(\chi_{A^c}\alpha_n)=0, \ \forall n\in\N$. Since $\lim\limits_{\epsilon\rightarrow 0}\chi_{A^c}\alpha_n=0$ we have
that this $\ f$ does not satisfy the Identity Theorem.
\end{example}

\begin{theorem}
\label{holomor} Let $f\in{\mathcal HG}(\Omega)$. If $\ f$ satisfies
the Identity Theorem  then $\ \forall z_0\in \Ot$ we have that
$C_f(z_0)\in D(\OK )$.
\end{theorem}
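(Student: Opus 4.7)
The plan is to argue the contrapositive: suppose $C_f(z_0)\notin D(\OK)$ for some $z_0\in\Ot$ and build a sequence of distinct points $\{z_m\}$ accumulating at $z_0$ with $f(z_m)=f(z_0)$, violating the Identity Theorem.

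The first step uses the classification of non-dense ideals. The earlier lemma characterising ideals $I\lhd\OR$ with $r_{\OR}(I)\neq 0$ as those contained in $\OR e$ for some $e\in\mathcal{B}(\OR)$, together with the identification $\mathcal{B}(\OK)=\{\chi_A:A\in\mathcal{S}\}$ recalled in Section~2, yields $A\in\mathcal{S}$ with $C_f(z_0)\subseteq\OK\chi_A$; equivalently,
\[
f^{(n)}(z_0)\,\chi_{A^c}\;=\;0\qquad\text{for every }n\in\N^*.
\]

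The second step mimics the example preceding the theorem. Set $z_m:=z_0+\chi_{A^c}\alpha_m$ for $m\in\N^*$. Because $A\in\mathcal{S}$, $\chi_{A^c}$ is a nonzero idempotent, and $\|\chi_{A^c}\alpha_m\|=e^{-m}$, so the $z_m$ are distinct, converge to $z_0$ in the sharp topology, and eventually belong to the domain of $f$. Expanding $f$ in a Taylor series about $z_0$,
\[
f(z_m)-f(z_0)\;=\;\sum_{n\geq 1}\frac{f^{(n)}(z_0)}{n!}\,(\chi_{A^c}\alpha_m)^n,
\]
and every summand carries the factor $f^{(n)}(z_0)\chi_{A^c}^{n}=0$, so $f(z_m)=f(z_0)$ for all $m$. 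Thus $\{z_m\}$ is an accumulating sequence inside $f^{-1}(f(z_0))$, contradicting the Identity Theorem.

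The main obstacle is legitimising this Taylor expansion, since an element of $\mathcal{HG}(\Omega)$ is only assumed to satisfy $\overline{\partial}f=0$ and need not be analytic a priori. Theorem~\ref{holom} identifies the analytic subclass with the sublinear holomorphic functions, so one natural route is to first establish the result for analytic $f$ and then observe that both ``satisfying the Identity Theorem'' and ``$C_f(z_0)\in D(\OK)$'' are local properties, depending only on the behaviour of $f$ in a sharp neighborhood of $z_0$ where the Taylor series converges. An alternative is to invoke the generalized Cauchy/Goursat representation to write $f(z_m)-f(z_0)$ as a contour integral in which the annihilation by $\chi_{A^c}$ passes under the integral sign. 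Either way, the algebraic vanishing step is immediate from the idempotent factorization above; only the local convergence statement at the perturbed points $z_m$ in the Colombeau setting demands genuine care.
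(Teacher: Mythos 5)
Your proof is correct and follows essentially the same route as the paper: the paper also argues the contrapositive, extracts a non-trivial idempotent $e$ with $C_f(z_0)\subseteq\OK e$, perturbs $z_0$ by $(1-e)\alpha_n$ to get a sequence converging to $z_0$ on which $f$ takes the value $f(z_0)$, and kills every term of the Taylor series with the complementary idempotent. The Taylor expansion that you flag as the main obstacle is exactly what the paper disposes of by citing Theorem 5.7 of \cite{afj}, so no further argument is needed there.
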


\begin{proof} Suppose that $C_f(z_0)$ is not dense. Then there exists a non-trivial idempotent $\ e\in \OK $ such that
$ce=c, \ \forall c\in C_f(z_0)$. By theorem 5.7 of \cite{afj} we
have that $\ f (z)=\displaystyle\sum_{n\geq 0} \frac{
f^{(n)}(z_0)}{n!}(z-z_0)^n$. Define $x_n:=x_0+(1-e)\alpha_n$. Then
$(x_n)$ converges to $x_0$ and $f(x_n)=0,\ \forall n\in \N$. It
follows that $f$ does not satisfy the identity theorem.
\end{proof}

We now give some examples showing that the converse also holds.

 Let $p(z) = a_0 + a_1z$ with $a_1\in Inv(\OK )$. Then $C_f(z_0) = \langle a_1\rangle\in D(\OK
 )$ and $f(z)=f(z_0)$ has a single solution in $\OK$.

 Consider the quadratic polynomial $p(z)=a_0+a_1z+a_2z^2$ with $a_1$
 invertible and $ea_2=0$ for some non-trivial idempotent $e$. If we
 try to solve the equation $p(z) = p(z_0)$ then, using the hypothesis, we obtain that
 $e(z-z_0)=0$. So if $(z_n)_{n\geq 1}$ is a sequence converging to $z_0$ such
 that $p(z_n)=p(z_0)$ then, using the definition the derivative of
 $f$ at $z_0$, we get
 $p^{\prime}(z_0)=\lim\limits_{n\rightarrow \infty}\frac{p^{\prime}(z_0)(z_n-z_0)}{-\alpha_{\ln \|z_n-z_0\|}}$. Now
 multiplying with $e$ yields $a_1e=0$ and hence $e=0$, a
 contradiction.

 Suppose now that $p(z)=a_0+a_1z+a_2z^2$ with both $a_1$ and $a_2$
 invertible and $\|a_1^{-1}a_2\|<1/2$. Solving the equation
 $p(z)-p(z_0)$ we obtain $0=(z-z_0)[a_1+a_2(z-z_0)]=
 a_1(z-z_0)[1+a_1^{-1}a_2(z+z_0)]$. Since $\|z_0\| \leq1$ we have that
 $\|z+z_0\|\leq 2$ and so $[1+a_1^{-1}a_2(z+z_0)]$ is a unit. It
 follows that $z=z_0$ and thus we have just one solution.

 Yet in the case of a quadratic polynomial $p(z)=a_0+a_1z+a_2z^2$
 such that $\langle a_1,a_2\rangle = \OK$ it is easy to see that if
 there exists a point $z_0$ such that $p^{\prime}(z_0)=0$ then $a_2$
 is invertible.

 So we see that in some cases we have a converse of
 theorem~\ref{holomor}.

%***********************************

\section{{\bf{Annihilator Ideals in  Polynomial Rings in several
Variables}}}

During this section $R$ denote any ring with identity. We begin with
a result that can easily be checked.

\begin{lemma}
\label{anula}
\begin{enumerate}
\item Let $f,g \in R[x_{1},..,x_{n}]$. Then $fRg=0$ if and only
if \newline $fR[x_{1},...,x_{n}]g=0$.
\item Let $f,g \in R[x_{1},..,x_{i-1}]$. Then
$fR[x_{1},..,x_{i-1}]g=0$ if and only if \newline
$fR[x_{1},..,x_{i}]g=0$.
\end{enumerate}
\end{lemma}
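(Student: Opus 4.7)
The plan is to prove both parts by polynomial expansion, using the fact that in $R[x_1,\ldots,x_n]$ the indeterminates commute with each other and with elements of $R$. In both statements, one implication is trivial by inclusion of rings, so all the content lies in the forward direction where we must extend an annihilation condition from coefficient-level interactions to arbitrary polynomial arguments.

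For part (1), first I would note that $R \subseteq R[x_1,\ldots,x_n]$, so the ``if'' direction is immediate. For the nontrivial direction, write $h = \sum_{\alpha} r_\alpha x^\alpha$ with $r_\alpha \in R$, and observe
\begin{equation*}
f h g \;=\; \sum_\alpha f r_\alpha g \, x^\alpha.
\end{equation*}
Since $fRg = 0$ gives $f r_\alpha g = 0$ for every $\alpha$, the whole sum vanishes. The only thing worth emphasizing is that the monomials $x^\alpha$ can be pulled past coefficients from $R$, which is exactly the defining commutativity relation in the polynomial ring; so this is genuinely a one-line calculation.

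For part (2), the ``if'' direction again comes from the inclusion $R[x_1,\ldots,x_{i-1}] \hookrightarrow R[x_1,\ldots,x_i]$. For the forward direction, I would expand an arbitrary element $h \in R[x_1,\ldots,x_i]$ as a polynomial in $x_i$ with coefficients in $R[x_1,\ldots,x_{i-1}]$, say $h = \sum_k h_k x_i^k$ with $h_k \in R[x_1,\ldots,x_{i-1}]$. Then
\begin{equation*}
f h g \;=\; \sum_k (f h_k g)\, x_i^k,
\end{equation*}
and each term $f h_k g$ vanishes by hypothesis, so $fhg = 0$.

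There is no real obstacle here: both parts are purely formal consequences of the commutation relations between $R$ and the indeterminates. The only care needed is bookkeeping with multi-indices in (1), but even there it suffices to check that coefficients pass through monomials. I would present the argument in roughly the form above, with part (2) following the same template but restricted to expansions in the single new variable $x_i$.
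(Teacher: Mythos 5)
Your proof is correct and is exactly the routine coefficient-expansion argument the paper intends: the authors state the lemma without proof, introducing it only as ``a result that can easily be checked.'' The one point of substance --- that the indeterminates are central in $R[x_1,\ldots,x_n]$ even though $R$ itself need not be commutative --- is the point you flag explicitly, so nothing is missing.
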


\begin{theorem} Let $f\in R[x_{1},...,x_{n}]$. Then
$r_{R[x_{1},...,x_{n}]}(f(x)R[x_{1},...,x_{n}])\neq (0)$ \newline if and
only if $r_{R[x_{1},...,x_{n}]}(f(x)R[x_{1},...,x_{n}])\cap R\neq
(0)$.\end{theorem}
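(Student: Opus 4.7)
The $(\Leftarrow)$ direction is immediate from $R \subseteq R[x_1,\ldots,x_n]$. For $(\Rightarrow)$, by Lemma~\ref{anula} the hypothesis amounts to the existence of a nonzero $g=\sum_\beta g_\beta x^\beta$ in $R[x_1,\ldots,x_n]$ with $fRg=0$; writing $f=\sum_\alpha f_\alpha x^\alpha$, this unpacks coefficient-by-coefficient to
\[
\sum_{\alpha+\beta=\gamma} f_\alpha\, r\, g_\beta \;=\; 0 \qquad \text{for every multi-index } \gamma \text{ and every } r\in R.
\]
My plan is to fix a monomial \emph{well}-ordering $\le$ on $\N^n$ compatible with multi-index addition (graded lex works; plain lex is only a total order on $\N^n$, not a well-order), choose $g$ so that its leading multi-index $\alpha^*_g$ is minimal in $\le$ among nonzero elements of $r_{R[x_1,\ldots,x_n]}(fR[x_1,\ldots,x_n])$, and show that its leading coefficient $g_{\alpha^*_g}\in R$ is the desired element of the intersection.

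The engine is the claim that $f_\alpha R g_\beta = 0$ for every $\alpha\in\mathrm{supp}(f)$ and every $\beta$. Granting it, the case $\beta=\alpha^*_g$ gives $fRg_{\alpha^*_g}=0$, whence $0\ne g_{\alpha^*_g}\in r_{R[x_1,\ldots,x_n]}(fR[x_1,\ldots,x_n])\cap R$. I will prove the claim by finite downward induction on $\alpha_0\in \mathrm{supp}(f)$ in the order $\le$. Testing the coefficient identity at $\gamma=\alpha_0+\alpha^*_g$, compatibility of $\le$ with addition forces any pair $(\alpha,\beta)$ contributing nontrivially to satisfy $\beta\le\alpha^*_g$ (else $g_\beta=0$), and therefore $\alpha\ge\alpha_0$; terms with $\alpha>\alpha_0$ in $\mathrm{supp}(f)$ vanish by the inductive hypothesis. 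What remains is $f_{\alpha_0}\, r\, g_{\alpha^*_g}=0$ for every $r\in R$. To upgrade this to $f_{\alpha_0}Rg_\beta=0$ for all $\beta$, consider $h:=f_{\alpha_0}(rg)$ for an arbitrary $r\in R$: since $Rf_{\alpha_0}r\subseteq R$, a direct computation gives $fRh\subseteq fRg=0$, so by Lemma~\ref{anula} $h$ lies in the right annihilator; yet its $x^{\alpha^*_g}$-coefficient $f_{\alpha_0}\, r\, g_{\alpha^*_g}$ has just been shown to vanish, so if $h\ne 0$ its leading multi-index would lie strictly below $\alpha^*_g$, contradicting minimality. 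Hence $h=0$, giving $f_{\alpha_0}\, r\, g_\beta=0$ for every $\beta$, and the inductive step is complete.

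The only real obstacle is the bookkeeping: one must pick a well-ordering compatible with multi-index addition so that both the selection of a minimal $g$ is possible and the isolation of the single surviving term $f_{\alpha_0} r g_{\alpha^*_g}$ at $\gamma=\alpha_0+\alpha^*_g$ works. Beyond that, this is a standard McCoy-style downward induction that runs uniformly in any number of variables, and I do not anticipate any genuinely new algebraic difficulty.
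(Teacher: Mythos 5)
Your proof is correct, but it is organized differently from the paper's. The paper reduces to the case $n=2$, views $R[x_1,x_2]$ as $(R[x_1])[x_2]$, and runs the univariate McCoy argument twice --- first in $x_2$, taking $g$ of minimal degree in $x_2$ to force a nonzero annihilating element down into $R[x_1]$, then in $x_1$ to push its leading coefficient down into $R$ --- invoking ``the same methods of \cite{H}'' for the key univariate step each time. You instead fix a monomial well-ordering on $\N^n$ compatible with addition, choose $g$ with minimal leading multi-index in the annihilator, and prove directly, by downward induction on $\mathrm{supp}(f)$, that the leading coefficient $g_{\alpha^*_g}$ annihilates $fR$; your crucial move --- replacing $g$ by $h=f_{\alpha_0}rg$, which stays in the annihilator by Lemma~\ref{anula} yet has strictly smaller leading multi-index unless it vanishes --- is exactly the McCoy--Hirano engine, executed in a single pass over all variables. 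Your version buys self-containedness (no appeal to \cite{H}) and uniformity in $n$ (no reduction to $n=2$, which the paper asserts rather than carries out); the paper's version buys brevity. The substantive steps all check out: the isolation of the lone surviving term $f_{\alpha_0}rg_{\alpha^*_g}$ at $\gamma=\alpha_0+\alpha^*_g$ via compatibility of the order with addition, and the conclusion $0\ne g_{\alpha^*_g}\in r_{R[x_1,\ldots,x_n]}(fR[x_1,\ldots,x_n])\cap R$. One slip in an aside: the plain lexicographic order \emph{is} a well-order on $\N^n$ (any total order on $\N^n$ compatible with addition and with $0$ minimal is one, by Dickson's lemma), so your stated reason for preferring graded lex is wrong, though the choice is harmless.
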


\begin{proof} Lemma~\ref{anula} will be used without explicit mentioning. It is sufficient to prove the theorem for
 $n=2$.

Let
$\ f=a_{0}(x_{1})+a_{1}(x_{1})x_{2}+....+a_{n}(x_{1})x_{2}^{n}$. If
degree of $f$ as a polynomial in $x_{2}$ is zero or $f(x)=0$, then
the assertion is clear. So, let $\deg_{x_{2}} (f)=m>0$. Assume, to
the contrary, that $r_{R[x_{1},x_{2}]}(f(x)R[x_{1},x_{2}])\cap
R=(0)$ and let $ g(x)=b_{0}(x_{1})+...+b_{m}(x_{1})x_{2}^{m}\in
r_{R[x_{1},x_{2}]}(fR[x_{1},x_{2}])$ be of minimal degree in
$r_{R[x_{1},x_{2}]}(f(x)R[x_{1},x_{2}])$\typeout{ as a polynomial in $x_{2}$,
such that $b_{m}(x_{1})\neq 0$}. Using   the same methods of
\cite{H}, we obtain that $fR[x_{1},x_{2}]b_{m}(x_{1})=0$,
a contradiction. So, $r_{R[x_{1},x_{2}]}(f(x)R[x_{1},x_{2}])\cap
R[x_{1}]\neq (0)$. Hence, there exists a polynomial $b(x_{1})\in
R[x_{1}]$ such that $fR[x_{1},x_{2}]b(x_{1})=0$. Thus,
$a_{i}(x_{1})R[x_{1}]b(x_{1})=0$, for all $i\in \{0,...,n\}$. Using the methods of
  \cite{H}, we have that $a_{i}(x_{1})R[x_{1}]b=0, \ \forall i\in \{0,...,n\}$, where $b$ is the leading coefficient of
$b(x_{1})$. Therefore, $fR[x_{1},x_{2}]b=0$.\end{proof} As an
application we have the following

\begin{lemma} Let $L$ be a linear operator over $\OK$ and $f$ a function whose
image has non-empty interior. If $L$ has non-trivial annilator, i.e., there exists a non-zero $a\in \OK$ such that $aL(u)=0, \forall u$, then
the equation $L(u)=f$ has no solutions.
\end{lemma}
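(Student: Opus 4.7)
The plan is to argue by contradiction: assume that a solution $u_0$ exists, and show that $a$ must be zero, contradicting the hypothesis that the annihilator is non-trivial.

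First I would extract the pointwise information. If $L(u_0) = f$, then $a \cdot f(x) = (aL(u_0))(x) = 0$ for every $x$ in the domain of $f$. Hence $a$ kills every element of the image of $f$ inside $\OK$; that is, $a \cdot y = 0$ for all $y \in \mathrm{Im}(f)$.

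Next I would use the hypothesis on the interior. By assumption there exist $y_0 \in \OK$ and $r > 0$ such that the sharp-open ball $B_r(y_0)$ is contained in $\mathrm{Im}(f)$. For every $w \in \OK$ with $\|w\| < r$ we have $y_0 + w \in B_r(y_0)$, so $a(y_0+w) = 0$. Since $ay_0 = 0$ as well, subtraction gives $aw = 0$ for every $w$ in the open ball $B_r(0)$ about the origin.

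Now I would upgrade ``$a$ annihilates $B_r(0)$'' to ``$a$ annihilates all of $\OK$'' by rescaling with the generalized scalars $\alpha_N$. Given an arbitrary $z \in \OK$, choose $N \in \N$ large enough that $e^{-N}\|z\| < r$. Since $\|\alpha_N z\| = \|\alpha_N\|\cdot \|z\| = e^{-N}\|z\|$, the element $\alpha_N z$ lies in $B_r(0)$, and therefore $a(\alpha_N z) = 0$. Because $\alpha_N$ is a unit of $\OK$ (by the characterisation of units recalled in Theorem~2.2, since $|\widehat{\alpha_N}(\varepsilon)| = \varepsilon^N \geq \varepsilon^N$), we may cancel $\alpha_N$ and conclude $az = 0$. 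Specialising to $z = 1$ yields $a = 0$, contradicting the choice of $a$.

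The only delicate step is the passage from the ball around zero to the whole algebra. The key fact that makes this painless is the identity $\|\alpha_N x\| = \|\alpha_N\|\cdot \|x\|$ combined with the fact that each $\alpha_N$ is invertible in $\OK$; without the exact multiplicativity for $\alpha_N$ (as opposed to just sub-multiplicativity) one would have to work a bit harder to guarantee that the shrunken element actually lands in $B_r(0)$.
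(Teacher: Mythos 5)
Your argument is correct and complete: evaluating $aL(u_0)=af$ at points shows $a$ annihilates the image of $f$, hence a whole ball $B_r(y_0)$, hence (after translating by $y_0$) the ball $B_r(0)$; the rescaling step via $\alpha_N$, using $\|\alpha_N x\|=\|\alpha_N\|\cdot\|x\|=e^{-N}\|x\|$ and the invertibility of $\alpha_N$, then forces $a=0$. It is worth noting that the paper itself supplies no proof of this lemma --- it is merely asserted as an ``application'' of the preceding theorem on annihilator ideals of $f(x)R[x_1,\dots,x_n]$ in polynomial rings. Your route does not pass through that theorem at all; instead it exploits the metric structure of $\OK$ (the multiplicativity of the norm against the scaling units $\alpha_N$ and the fact that every nonempty open set, once translated to the origin, absorbs all of $\OK$ under such scalings). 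This is arguably the more natural argument, since the polynomial-ring theorem concerns one-sided annihilators of polynomial ideals and has no evident bearing on the image of a generalized function having interior points; your proof also makes explicit exactly which structural facts about $\OK$ are needed (Theorem 2.2's unit criterion and the identity $\|\alpha_r x\|=\|\alpha_r\|\,\|x\|$), which the paper recalls in Section 2 but never visibly deploys here. The only implicit hypothesis you rely on is that point evaluation commutes with multiplication by the constant $a\in\OK$, i.e.\ $(a\,L(u_0))(x)=a\cdot(L(u_0))(x)$, which holds for Colombeau point values and is safe to use.
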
 Note that this extends a result of \cite{afj}.

 \vv
 
\noindent{\underline{{\bf{aknowledgement:}}}} The first and the last
authors are grateful to IME-USP and UFRGS, respectively, were this work was
done.

\end{document}